\setlist[enumerate]{noitemsep}
\setlist[enumerate, 1]{label = (\alph*)}
\setlist[itemize]{noitemsep}
\crefname{equation}{}{} 
\numberwithin{equation}{subsection}
\newtheorem{lemma}[equation]{Lemma}
\newtheorem{proposition}[equation]{Proposition}
\newtheorem{corollary}[equation]{Corollary}
\newtheorem*{corollary*}{Corollary}
\newlist{propositionenum}{enumerate}{1}
\setlist[propositionenum,1]{
    label=(\alph*),
    ref={\theproposition(\alph*)},
    noitemsep,
}
\crefname{propositionenumi}{Proposition}{Propositions}
\newtheorem{maintheorem}{Theorem}
\renewcommand{\themaintheorem}{\arabic{maintheorem}} 
\crefname{maintheorem}{Theorem}{Theorems}
\newlist{maintheoremenum}{enumerate}{1}
\setlist[maintheoremenum,1]{
    label=(\alph*),
    ref={\themaintheorem(\alph*)},
}
\crefname{maintheoremenumi}{Theorem}{Theorems}
\theoremstyle{definition}
\newtheorem{definition}[equation]{Definition}
\newtheorem{remark}[equation]{Remark}
\newtheorem{example}[equation]{Example}
\newtheorem{notation}[equation]{Notation}
\newtheorem*{notation*}{Notation}
\newlist{definitionenum}{enumerate}{1}
\setlist[definitionenum,1]{
    label=(\arabic*),
    ref={\thedefinition(\arabic*)},
    noitemsep,
}
\crefname{definitionenumi}{}{}
\newcommand{\deftextcommand}[1]{%
  \expandafter\providecommand\csname #1\endcsname{\mathrm{#1}}%
}
\newcommand{\deftextcommands}[1]{%
    \forcsvlist{\deftextcommand}{#1}%
}
\newcommand{\bbZ}{\mathbb{Z}}
\newcommand{\bbF}{\mathbb{F}} 
\newcommand{\bbT}{\mathbb{T}}
\newcommand{\cA}{\mathcal{A}}
\newcommand{\cB}{\mathcal{B}}
\newcommand{\cC}{\mathcal{C}}
\newcommand{\cE}{\mathcal{E}}
\newcommand{\cF}{\mathcal{F}}
\newcommand{\cO}{\mathcal{O}}
\newcommand{\cP}{\mathcal{P}}
\newcommand{\cR}{\mathcal{R}}
\newcommand{\cS}{\mathcal{S}}
\newcommand{\cW}{\mathcal{W}}
\newcommand{\cX}{\mathcal{X}}
\DeclareMathOperator*{\colim}{colim}
\DeclareMathOperator*{\hocolim}{hocolim}
\newcommand{\wed}{\vee}
\newcommand{\sus}{\Sigma}
\newcommand{\WhTC}{\Wh^{\TC}}
\newcommand{\Ksg}{\K^\sg}
\newcommand{\ul}{\underline}
\newcommand{\wt}{\widetilde}
\newcommand{\tEG}{\wt{EG}}
\newcommand{\nv}{^{-1}}
\renewcommand\epsilon\varepsilon
\renewcommand\emptyset\varnothing
\title{Derived induction theory for the K-theory of modular group algebras}
\author{Chase Vogeli}
\date{October 29, 2025}
\begin{document}

\newpage
\maketitle

\begin{abstract}
    We prove an induction theorem for the higher algebraic K-groups of group algebras $kG$ of finite groups $G$ over characteristic $p$ finite fields $k$. For a certain class of finite groups, which we call $p$-isolated, this reduces calculations to calculations for their $p$-subgroups. We do so by showing that the stable module categories of $kH$ as $H$ ranges over subgroups of $G$ assemble into a categorical Green functor, which results in a spectral Green functor structure on K-theory. By general induction theory, this reduces proving a spectrum-level induction statement to proving an induction statement on $\pi_0$ Green functors, which we accomplish using modular representation theory. For $p$-isolated groups with Sylow $p$-subgroups of order $p$, we produce explicit new calculations of K-groups.
\end{abstract}
    
\setcounter{tocdepth}{2}
\tableofcontents

\newpage
\section{Introduction and main results}

Algebraic K-theory is a fundamental invariant of rings that connects to mathematics ranging from geometric topology to number theory. When applied to group algebras, K-theory houses important invariants such as Wall's finiteness obstruction or Whitehead torsion. 

While the low-degree K-groups of integral group algebras $\bbZ G$ have been extensively studied \cite{Oliver:Whitehead}, complete calculations in high degrees are generally intractable. Indeed, this is already the case for trivial $G$: Kurihara showed that determining the remaining unknown K-groups of $\bbZ$ would resolve the Kummer--Vandiver on class groups of cyclotomic fields \cite{Kurihara}. 

In contrast, complete knowledge of K-groups is possible over finite fields. Such computations began with Quillen's seminal 1972 work, which computed the K-groups of finite fields themselves \cite{Quillen:finite-field}. Further calculations relied on the the development of topological cyclic homology (TC) in the following decades, which has computational implications for K-theory via the cyclotomic trace. Using these, Hesselholt--Madsen computed the K-groups of truncated polynomial algebras \cite{Hesselholt--Madsen:truncated}, which includes group algebras of finite cyclic $p$-groups \cite{Madsen:survey}. Recent work of Lück--Reich--Rognes--Varisco establishes an induction result for the TC of group algebras with respect to the family of cyclic subgroups \cite{LRRV2}. Using this result, forthcoming work of Moselle will calculate the TC of group algebras of elementary abelian $p$-groups.

In this paper, we complement these results about TC by proving an induction result for the K-theory of group algebras using recent advances in equivariant stable homotopy theory and calculations in modular representation theory. Our result by no means supplants TC as a calculational tool, but rather offers a way to extend results known by TC calculations. Using these, we enlarge the class of groups for which complete calculations of K-groups are available.

\begin{notation*}
In this work, $G$ is a finite group, $k$ is a field of prime characteristic $p$, and $S$ is a Sylow $p$-subgroup of $G$ with normalizer $N=N_S(G)$ and Weyl group $W=W_G(S)=N/S$. There are two hypotheses that appear frequently. We say 
\begin{itemize}
    \item $G$ is \emph{$p$-isolated} if $G$ contains no element of order $pq$ for $q$ a distinct prime from $p$, and
    \item $H\leq G$ is a \emph{trivial intersection} subgroup if $H\cap gHg\nv=e$ for all $g\notin N_G(H)$.
\end{itemize}
\end{notation*}

\begin{maintheorem} \label[maintheorem]{thm:main}
    \begin{maintheoremenum}
        \item  \label[maintheoremenumi]{thm:mainColim}
        If $k$ is finite and $G$ is $p$-isolated, then the $p$-adic K-groups of $kG$ satisfy
        \[
            \K_n(kG;\bbZ_p) \cong \colim_{G/H\in\cO_p(G)} \K_n(kH;\bbZ_p), \quad n>0,
        \]
        where $\cO_p(G)$ denotes the full subcategory of the orbit category spanned by the $p$-subgroups.
        \item \label[maintheoremenumi]{thm:mainOrb}
        If, in addition, any Sylow $p$-subgroup $S$ is trivial intersection, then the above colimit reduces to the coinvariants
        \[
            \K_n(kG;\bbZ_p) \cong \K_n(kS;\bbZ_p)/W, \quad n>0.
        \]
        with respect to the Weyl group $W$.
    \end{maintheoremenum}
\end{maintheorem}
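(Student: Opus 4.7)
The plan is to reduce the spectrum-level induction problem to a representation-theoretic statement on $\pi_0$ Green functors, exploiting the spectral Green functor structure on $\K(k-)$ arising from the categorical Green functor of stable module categories established earlier. Concretely, I would invoke a derived induction theorem in the style of Dress, Balmer--Dell'Ambrogio--Sanders, or Mathew--Naumann--Noel: for a spectral Green functor on $G$, the natural map
\[
    \hocolim_{G/H \in \cO_p(G)} \K(kH)_p^\wedge \to \K(kG)_p^\wedge
\]
is a $p$-adic equivalence on positive homotopy groups provided the $\pi_0$ Green functor satisfies induction from $p$-subgroups after $p$-completion. It therefore suffices to show that the identity element of the Green functor $\K_0(k-)_p^\wedge$ lies in the image of the transfers from $p$-subgroups.

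The $\pi_0$ statement falls out of modular representation theory. By the Brauer--Berman--Witt/Conlon induction theorem, $\K_0(kG)$ (equivalently the modular representation ring) is generated by induction from $p$-elementary subgroups---groups of the form $C \times P$ with $C$ a cyclic $p'$-group and $P$ a $p$-group. The $p$-isolated hypothesis forbids elements of order $pq$ for primes $q \neq p$, which forces the $C$-factor to be trivial, so every $p$-elementary subgroup of $G$ is itself a $p$-subgroup. Conlon's theorem therefore directly produces the $\pi_0$ induction needed, and the derived induction machinery upgrades it to the colimit formula in part (a) on positive homotopy.

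For part (b), the TI hypothesis collapses the orbit category $\cO_p(G)$ dramatically. Every nontrivial $p$-subgroup of $G$ is contained in a unique conjugate of $S$, so by the TI condition the normalizer in $G$ of any nontrivial $p$-subgroup of $S$ is contained in $N = N_G(S)$. The full subcategory of $\cO_p(G)$ on nontrivial $p$-subgroups is therefore equivalent to the corresponding orbit category of $N$, and a cofinality/terminality argument centered at $G/S$---after accounting for the free orbit $G/e$, which contributes nothing new to positive $\K$-groups---identifies the colimit from part (a) with the Weyl coinvariants $\K_n(kS;\bbZ_p)/W$.

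The main obstacle, in my estimation, is matching Conlon's theorem to the precise hypothesis the derived induction machinery demands: Conlon naturally furnishes a generation statement for the abelian group $R_k(G)$, while the derived framework typically requires the unit to lie in an ideal generated by Green-functor transfers, and one must reconcile these formulations (possibly after $p$-completion, where transfers and norms can behave more favorably). A secondary subtlety is tracking connectivity and completion carefully so that the $n > 0$ restriction in the statement is correctly traced back to a $\pi_0$ discrepancy---say from the free orbit $G/e$---rather than a convergence failure of the relevant descent spectral sequence.
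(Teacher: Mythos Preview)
Your overall architecture---reduce to a $\pi_0$ induction statement via Mathew--Naumann--Noel, then discharge that statement with modular representation theory---matches the paper. But the execution has a genuine gap: you are applying the machinery to the wrong $G$-spectrum.

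The categorical Green functor of stable module categories produces a spectral Green functor $\Ksg_G(k)$ whose $H$-fixed points are $\K(\stmod(kH))$, \emph{not} $\K(kH)$. The assignment $H\mapsto \K(kH)$ is the spectral Mackey functor $\K_G(k)$, and it is \emph{not} a Green functor: the tensor unit $k$ is not projective over $kH$ once $p$ divides $|H|$, so $\proj_H(k)$ has no monoidal unit and $\K_0(kH)=P_k(H)$ is not a ring. Your sentence ``the identity element of the Green functor $\K_0(k-)_p^\wedge$'' therefore does not typecheck, and the conflation ``$\K_0(kG)$ (equivalently the modular representation ring)'' is incorrect---that is $\G_0(kG)=R_k(G)$.

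This matters for the $\pi_0$ step. Brauer/Conlon induction generates $R_k(G)$ from elementary subgroups for \emph{all} primes, and under $p$-isolation these split into $p$-groups and $p'$-groups; but $R_k$ does not vanish on $p'$-groups, so you cannot simply discard them. The paper instead runs the $\pi_0$ argument in $\ul S_k=\ul\pi_0\Ksg_G(k)=\coker(\ul P_k\to\ul R_k)$, which \emph{does} vanish on $p'$-subgroups because the Cartan matrix is the identity there. Brauer induction for $\ul R_k$ descends to $\ul S_k$, and now only $p$-groups survive. This is the missing idea.

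Once MNN gives $\cF_p$-splitting for $\Ksg_G(k)$, one still has to return to $\K_G(k;\bbZ_p)$. The paper does this via the boundary map of the Cartan cofiber sequence $\K_G(k)\to\K^G(k)\to\Ksg_G(k)$: Quillen's computation kills $\ul\pi_n\K^G(k;\bbZ_p)$ for $n>0$ (the endomorphism rings in the d\'evissage decomposition are finite fields), and Dress--Kuku shows $\Ksg$ is already $p$-local, yielding $\Ksg_{n+1}(kH)\cong\K_n(kH;\bbZ_p)$ for $n>0$. This isomorphism is where both the finiteness of $k$ and the restriction $n>0$ actually enter---not from the free orbit $G/e$ as you suggest.

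Your part~(b) is essentially correct and matches the paper's cofinality argument; the vanishing at $G/e$ you invoke is really the vanishing of $\Ksg(k)$ (equivalently $\K_n(k;\bbZ_p)=0$ for $n>0$), after which TI makes the conjugates of $S$ cofinal in $\cO_p(G)$ and the colimit collapses to $W$-coinvariants.
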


This result concerns K-groups at the prime $p$, since $\K_*(kG)$ is equivalent to $\G_*(kG)$ after localization away from $p$ by the Dress--Kuku Theorem \cite{Dress--Kuku}. The G-theory of such group algebras is well-understood thanks to dévissage techniques in K-theory; see \cref{prop:Gtheory} for a description.

We highlight two extreme cases of \cref{thm:main}. First, if $p$ does not divide the order of $G$, then the only $p$-subgroup is trivial. The colimit decomposition reduces the $p$-adic K-groups of $kG$ in positive degrees to those of the ground field $k$, which are known to vanish by Quillen's computation. This is expected behavior in light of the equivalence $\K_*(kG)\xrightarrow{\sim} \G_*(kG)$ when $p$ does not divide the order of $G$, and the aforementioned G-theory decomposition.

Second, if $G$ is a $p$-group, then $S=G$, and our result provides no new information. Computing $\K_*(kG)$ when $G$ is a $p$-group is a difficult problem that requires different techniques, such as trace methods. Instead, we view \cref{thm:main} as a way to promote existing computations for $p$-groups to computations for groups containing these as Sylow subgroups. The state of knowledge for $p$-groups includes cyclic $p$-groups $\K_*(kC_{p^n})$ by work of Madsen \cite{Madsen:survey} and elementary abelian $p$-groups $\K_*(kC_p^n)$ by forthcoming work of Moselle.

\subsection{Overview of key ideas}

\subsubsection{The stable module category}

The central object is the \emph{stable module category} $\StMod(kG)$ of the group algebra $kG$. This is a stable $\infty$-category that has been extensively studied by representation theorists and homotopy theorists alike. The utility of the stable module category for K-theory is twofold: it admits multiplicative structure, and its K-theory captures the sought-after $p$-local summand of $\K_*(kG)$.

To introduce the stable module category, one starts from the inclusion of the category $\proj_G(k)$ of finite-dimensional $G$-representations over $k$ which are projective over $kG$ into the category $\rep_G(k)$ of all representations. The induced map on $\K_0$ recovers the well-studied \emph{Cartan map} from representation theory. One can equally well consider the induced map on K-theory spectra $\K(kG)\to \G(kG)$. Sarazola \cite{Maru:cotorsion} has shown that the cofiber of this map is the K-theory of a Waldhausen category which we call the \emph{stable module category}.

The stable module category can equally well be presented as a Verdier quotient of stable $\infty$-categories $\Perf(k)^{BG}/\Perf(kG)$ \cite{Rickard:stable-equivalence}. In this setup, the aforementioned cofiber sequence follows from the fact that K-theory is a localizing invariant \cite{BGT}. However, in this work, we employ the aforementioned explicit 1-categorical model. This has several advantages, such as arising from an economical model structure on the category of representations \cite{Hovey:modelcats}.

By replacing representations of $G$ over $k$ with coherent sheaves over a scheme $X$, one obtains an analogue to the stable module category  which goes by the name the \emph{singularity category} of $X$ \cite{Orlov:sg}. Briefly, this is because if $X$ is smooth, then all complexes of sheaves are perfect complexes, so this quotient detects the existence of singularities. Motivated by this, we term the K-theory of $\stmod(kG)$ the \emph{singular K-theory} $\Ksg(kG)$ of the group algebra $kG$.

\subsubsection{Mackey and Green functors}

Mackey functors arose in representation theory as an axiomatic framework for induction results \cite{Green:axiomatic,Dress:contributions}. Briefly, a \emph{Mackey functor} is a collection of abelian groups parameterized by subgroups of a finite group $G$ equipped with abstract induction and restriction operations mimicking those of representation rings. From the onset, multiplicative structure was recognized as indispensable to induction theory \cite{Dress:induction}. 
A \emph{Green functor} is a ring in Mackey functors, meaning in particular that each level is a commutative ring, and the induction and restriction operations respect this structure in a suitable sense. 

These notions have since been lifted to a homotopical context, and now form the underpinnings of equivariant stable homotopy theory. A foundational result, first shown by Guillou--May \cite{Guillou--May} and later revisited by Nardin \cite{Nardin:spectral-Mackey}, is that $G$-spectra are equivalent to \emph{spectral Mackey functors}, which are roughly Mackey functors valued in spectra. Just as K-theory spectra categorify Grothendieck groups, the equivariant K-theory $G$-spectra we study category these classical representation ring Mackey functors. 

A number of authors have considered constructions of K-theory $G$-spectra from categorical input. In their work on equivariant $A$-theory, Malkiewich--Merling \cite{Malkiewich--Merling1}, describe how a suitable collection of Waldhausen categories satisfying axioms analogous to a Mackey functor yield K-theory $G$-spectra. This was later revisited and simplified by Calle--Chan--Mejia \cite{Calle--Chan--Mejia}. Barwick has developed a different approach, which packages the data into a single bicartesian fibration \cite{Barwick:smfeakt}. This fibrational approach is the one we employ, as it was later extended by Barwick--Glasman--Shah to a machine which produces \emph{spectral Green functors} from suitable categorical input \cite{Barwick--Glasman--Shah}.

\subsubsection{The proof of \cref{thm:main}}

The central construction involves the three spectral Mackey functors which, informally, make the assignments
\begin{align*}
    \K_G(k) &: G/H \mapsto \K(kH), \\
    \K^G(k) &: G/H \mapsto \G(kH), \\
    \Ksg_G(k) &: G/H \mapsto \Ksg(kH).
\end{align*}
These fit into a cofiber sequence
\[
    \K_G(k) \to \K^G(k) \to \Ksg_G(k),
\]
which on $H$-fixed points recovers the aforementioned cofiber sequence of spectra
\[
    \K(kH) \to \G(kH) \to \K(\stmod(kH)).
\] 
The middle $G$-spectrum $\K^G(k)$ is the $G$-spectrum constructed by Merling \cite{Mona:Grings} associated to $k$ as a ring with trivial $G$-action. This is also called the K-theory of group actions by Barwick \cite{Barwick:smfeakt}. The first $G$-spectrum $\K_G(k)$ is the so-called ``coBorel version'' of equivariant algebraic K-theory which appears in Clausen--Mathew--Naumann--Noel \cite{CMNN2}. The last $G$-spectrum $\Ksg_G(k)$ has not appeared in the literature before, to our knowledge.

Our main technical innovation is showing that $\Ksg_G(k)$ is furthermore a spectral Green functor. We do so using the machinery developed by Barwick--Glasman--Shah, which they used to show that the K-theory of group actions $\K^G(k)$ is also a spectral Green functor \cite{Barwick--Glasman--Shah}. This is taken up in \cref{sec:Ksg}, where we lay the foundations for the the Cartan cofiber sequence of spectral Mackey functors. The main result is the following.

\begin{maintheorem} \label[maintheorem]{thm:Green}
    The spectral Mackey functor $\Ksg_G(k)$ admits the structure of a spectral Green functor, and the map $\K^G(k)\to \Ksg_G(k)$ is a map of spectral Green functors.
\end{maintheorem}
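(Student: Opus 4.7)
The plan is to realize $\Ksg_G(k)$ as the K-theory of a categorical Green functor in the sense of Barwick--Glasman--Shah, obtained as a tensor-ideal quotient of the categorical Green functor already known to produce $\K^G(k)$. Recall that the BGS input for $\K^G(k)$ is the assignment $G/H \mapsto \Perf(k)^{BH}$ with restriction along subgroup inclusions, transfer given by induction $\Ind_K^H = kH\otimes_{kK}(-)$, and symmetric monoidal structure given by tensor product over $k$ with the diagonal action. The Frobenius isomorphism $\Ind_K^H(V\otimes \Res_K^H W)\cong \Ind_K^H V\otimes W$ supplies the projection formula making this a categorical Green functor.

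First I would verify that $G/H \mapsto \Perf(kH)\subseteq \Perf(k)^{BH}$ is a sub-categorical Mackey functor which is additionally a tensor ideal level-wise. The three needed properties are classical facts of modular representation theory. Restriction preserves perfectness because $kH$ is free of finite rank over $kK$ for $K\leq H$. Induction preserves perfectness because $\Ind_K^H(kK)\cong kH$, so induction sends projectives to projectives. Finally, if $P$ is a projective $kH$-module and $V\in\Perf(k)^{BH}$, then $P\otimes_k V$ with the diagonal action is again projective over $kH$: for free $P$ one has an isomorphism $kH\otimes_k V\cong kH\otimes_k V_\mathrm{triv}$ of $kH$-modules via $h\otimes v\mapsto h\otimes h\nv v$, and the general case follows by taking summands.

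Next I would invoke the general fact that a tensor ideal inside a symmetric monoidal stable $\infty$-category admits a Verdier quotient with inherited symmetric monoidal structure, for which the localization is a monoidal functor; this is the $\infty$-categorical incarnation of the tensor-triangulated structure on $\StMod(kH)$. Combined with the preservation of the ideal by restriction and transfer verified above, this produces a cofiber sequence
\[
    \Perf(k\cdot-) \to \Perf(k)^{B-} \to \StMod(k-)
\]
of categorical Green functors. Applying the BGS K-theory functor then yields the cofiber sequence of spectral Mackey functors, equips $\Ksg_G(k)$ with the structure of a spectral Green functor, and identifies $\K^G(k)\to\Ksg_G(k)$ as a map of spectral Green functors.

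The main obstacle will be encoding the tensor-ideal condition and the Frobenius data coherently within the bicartesian fibration that BGS use to model a categorical Green functor, as opposed to merely verifying the necessary isomorphisms one subgroup at a time. I expect the strict $1$-categorical Waldhausen model for the stable module category supplied by Sarazola's theorem to simplify this significantly, since in that setting the projection formula becomes a natural identity of functors and the quotient by the ideal can be carried out strictly on underlying Waldhausen categories, so that the Green structure descends without additional coherence data.
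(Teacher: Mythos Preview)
Your proposal is correct and shares the paper's central insight: the projectives form a tensor ideal (your third verification is exactly the paper's \cref{lem:projbox}), so the monoidal structure descends to the stable module category and the quotient map is monoidal. The technical packaging differs. You frame the construction as a Verdier quotient of stable $\infty$-categories by a tensor ideal, checking levelwise that restriction and transfer preserve the ideal; the paper instead places an external product $\boxtimes$ on the total $1$-category $\rep$ of the Grothendieck fibration, realizes $\cS$ as the fiberwise localization $\cR[\cW\nv]$, and appeals to Hinich's monoidal localization theorem after showing $\boxtimes$ preserves stable equivalences (which is immediate from the tensor-ideal property). This lets the paper verify the Barwick--Glasman--Shah conditions for a symmetric monoidal Waldhausen bicartesian fibration directly on a strict symmetric monoidal $1$-category, bypassing the coherence issue you correctly flag in your final paragraph. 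Your instinct to pass to the Sarazola $1$-categorical model is exactly what the paper does, and once there the two routes converge.
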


By the derived induction theory of Mathew--Naumann--Noel \cite{MNN2}, the spectral Green functor structure on $\Ksg_G(k)$ reduces the proof of \cref{thm:main} to proving an induction result for the ordinary Green functor $\ul S_k = \ul\pi_0 \Ksg_G(k)$. In \cref{sec:ind}, we review these concepts from induction theory, and prove the following.

\begin{maintheorem} \label[maintheorem]{thm:defbase}
    If $G$ is $p$-isolated, the Green functor $\ul S_k$ satisfies induction with respect to the family $\cF_p$ of $p$-subgroups, that is, the sum of induction maps 
    \[
        \bigoplus_{H\in\cF_p} \ul S_k(H) \xrightarrow{\bigoplus \Ind_H^G} \ul S_k(G).
    \]
    is surjective.
\end{maintheorem}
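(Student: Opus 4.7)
The plan is to combine Brauer's classical induction theorem with the observation that, in a $p$-isolated group, every elementary subgroup is either a $p$-group or a $p'$-group.

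Since $\ul S_k$ is a Green functor by \cref{thm:Green}, Frobenius reciprocity makes the image of $\bigoplus_{H\in\cF_p}\Ind_H^G$ an ideal of $\ul S_k(G)$; it therefore suffices to show the unit $[k]\in\ul S_k(G)$ lies in this image. Brauer's induction theorem provides an expression $1 = \sum_i n_i \Ind_{E_i}^G \chi_i$ in the ordinary representation ring $R(G)$, with each $E_i$ elementary. Writing $E_i = Q_i \times C_i$ with $Q_i$ a $q_i$-group and $C_i$ cyclic of order coprime to $q_i$, the $p$-isolated hypothesis forces each $E_i$ to be either a $p$-group or a $p'$-group: otherwise $E_i$ would contain commuting elements of orders $p$ and $r$ for some prime $r\neq p$, generating an element of order $pr$.

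Now apply the decomposition map $R(-) \to \G_0(k-)$, followed by the Mackey-functor quotient to $\ul S_k$, which together form a map of Mackey functors. Brauer's identity transports to $[k] = \sum_i n_i \Ind_{E_i}^G [d(\chi_i)]$ in $\ul S_k(G)$. For each $E_i$ that is a $p'$-group, $kE_i$ is semisimple by Maschke's theorem, so the Cartan map for $kE_i$ is an isomorphism and $\ul S_k(E_i) = 0$; the corresponding summands vanish. What remains is an expression of $[k]$ as a sum of inductions from $p$-subgroups, as desired.

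The main obstacle I anticipate is technical rather than conceptual: ensuring that $R(-) \to \G_0(k-) \to \ul S_k$ assembles into a Mackey-functor map compatible with the Green-functor structure furnished by \cref{thm:Green}. The individual ingredients are classical — the decomposition map respects induction and restriction, and the cokernel of a Mackey-functor map inherits a Mackey-functor structure — but marshaling them inside the paper's fibrational model of Green functors will require some care.
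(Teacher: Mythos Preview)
Your strategy—Brauer induction, the $p$-isolation dichotomy on elementary subgroups, and the vanishing of $\ul S_k$ on $p'$-groups—is exactly the paper's. The only cosmetic difference is that the paper applies Brauer induction directly to $\ul R_k$ (via a diagram chase against $\ul P_k$) rather than to the characteristic-zero ring $R(G)$ followed by decomposition.

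The genuine gap is not the one you anticipate. A decomposition map $R(G)\to R_k(G)$ requires a $p$-modular system $(K,\cO,k)$ with $K$ a splitting field; but if $K$ contains the prime-to-$p$ roots of unity needed for classical Brauer induction, then so does its residue field $k$, since adjoining such roots to $W(k)$ is unramified. Thus your argument, as written, only treats sufficiently large $k$. The paper's Brauer step has the same restriction, and the paper closes the gap with a separate argument: embed $k$ in a sufficiently large $k'$, observe that $\ul S_k(H)\to\ul S_{k'}(H)$ is an isomorphism for every $p$-group $H$ (both sides are $\bbZ/|H|$, generated by the trivial module), and show that $\ul S_k(G)\to\ul S_{k'}(G)$ is injective by applying the snake lemma to the Cartan short exact sequences over $k$ and $k'$, using that $\coker\big(\ul P_k(G)\hookrightarrow\ul P_{k'}(G)\big)$ is torsion-free while any kernel on the $\ul S$-side must embed in the finite group $\ul S_k(G)$. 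You will need an argument of this kind to cover arbitrary $k$.

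The obstacle you flag is not a real one: the decomposition map is classically known to commute with induction and restriction (Serre, \S 15), and the entire argument lives in the ordinary $\pi_0$ Green functor $\ul S_k$, whose structure is the familiar representation-theoretic one. No compatibility with the paper's fibrational model is needed beyond invoking \cref{thm:Green} to know the image of induction is an ideal.
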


We assemble these ingredients into a proof of \cref{thm:main} in \cref{ssec:proof}. The key remaining piece is how singular K-theory of $kG$ captures the $p$-local summand of the ordinary K-groups of $kG$. Specifically, we show in \cref{cor:boundary} that for finite $k$, the boundary map associated to the Cartan cofiber sequence induces isomorphisms
\[
    \partial: \Ksg_{n+1}(kG) \xrightarrow{\sim} \K_n(kG;\bbZ_p), \quad n>0.
\]

\subsubsection{Applications} 

In \cref{sec:ex}, we give applications of \cref{thm:main}. We pay particular attention to the the situation where $G$ is $p$-isolated and the Sylow $p$-subgroup $S$ has order $p$. In this case, the trivial intersection condition is automatically satisfied, and the Weyl group action on K-groups  from \cref{thm:mainOrb} can be explicitly identified.

\begin{maintheorem} \label[maintheorem]{thm:Sylp}
    Let $G$ be a $p$-isolated finite group whose Sylow $p$-subgroups have order $p$. Then, for a finite field $k$,
    \[
        \K_n(kG;\bbZ_p) \cong \begin{cases}
            k^{\frac{p-1}{|W|}i} & n=2i-1>0, \\
            0 & \textrm{otherwise.}
        \end{cases}
    \]
\end{maintheorem}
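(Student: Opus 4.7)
The plan is to combine \cref{thm:mainOrb} with the Hesselholt--Madsen computation of $\K_*(kC_p;\bbZ_p)$, then analyze the Weyl group action.

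The trivial intersection hypothesis of \cref{thm:mainOrb} is automatic when $|S| = p$: any subgroup of the cyclic group $S$ of prime order is either trivial or all of $S$, so if $g \notin N_G(S)$ then $S \cap gSg\nv$ cannot equal $S$ and must therefore be trivial. Hence \cref{thm:mainOrb} reduces the problem to computing
\[
    \K_n(kG;\bbZ_p) \cong \K_n(kS;\bbZ_p)/W, \quad n > 0.
\]

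Next, under the $k$-algebra isomorphism $kC_p \cong k[x]/x^p$ given by $g \mapsto 1+x$, I would invoke the Hesselholt--Madsen calculation \cite{Madsen:survey} to obtain $\K_{2i-1}(kS;\bbZ_p) \cong k^{(p-1)i}$ for $i > 0$, with vanishing in positive even degrees. This already handles the special case $W = 1$ (equivalently $G = S$), and reduces the remaining work to an equivariant refinement of the Hesselholt--Madsen answer.

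The main obstacle is identifying the $W$-action on $\K_{2i-1}(kS;\bbZ_p)$ and computing its coinvariants. The Weyl group embeds into $\Aut(S) = (\bbZ/p)^\times$ and acts on $\K(kS)$ by functoriality. The goal is to establish that $\K_{2i-1}(kS;\bbZ_p)$ is a free $k[W]$-module of rank $(p-1)i/|W|$, in which case the coinvariants---equal to invariants since $|W|$ divides $p-1$ and is thus invertible in $k$---have $k$-dimension $(p-1)i/|W|$, as claimed. I would establish this freeness by tracking the $\Aut(C_p)$-equivariance through the cyclotomic trace to $\TC(kS)$; the expected picture is that $\K_{2i-1}(kS;\bbZ_p)$ realizes $i$ copies of the regular representation of $\Aut(C_p) = (\bbZ/p)^\times$ (one for each weight, since $k$ contains all $(p-1)$-st roots of unity), and restriction of the regular representation to any subgroup $W$ remains free. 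A technical subtlety to address is that the isomorphism $kC_p \cong k[x]/x^p$ does not intertwine the $\Aut(C_p)$-action on $kC_p$ with the naive scaling action $x \mapsto ax$ on the truncated polynomial algebra, since $g \mapsto g^a$ corresponds to $x \mapsto (1+x)^a - 1$. These actions do, however, agree on the associated graded for the augmentation ideal filtration, which suffices for the isotypic count.
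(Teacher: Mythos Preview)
Your overall strategy is sound and matches the paper's: invoke \cref{thm:mainOrb} (trivial intersection is automatic for $|S|=p$), then compute the $W$-coinvariants of $\K_{2i-1}(kC_p;\bbZ_p)\cong k^{(p-1)i}$. Your ``expected picture''---that this is $i$ copies of the regular $k[\Aut(C_p)]$-representation, hence free over $k[W]$---is exactly right. The gap is in how you propose to verify it.

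Your route through the truncated polynomial model runs into the very obstacle you flag: the isomorphism $kC_p\cong k[x]/x^p$ is not $\Aut(C_p)$-equivariant, and appealing to the associated graded of the augmentation filtration is not a proof. You would need to know that the Hesselholt--Madsen computation is compatible with some $W$-stable filtration whose graded pieces you can identify, and then (since $k[W]$ is semisimple) lift back. None of this is carried out, and extracting it from the original trace-method argument would be substantial work.

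The paper bypasses this entirely. Rather than using the truncated polynomial presentation, it uses the free loop space / centralizer decomposition
\[
\THH(kC_p)\simeq \THH(k)\otimes LBC_{p+}\simeq \THH(k)\otimes\Big(\coprod_{x\in C_p} BC_{p+}\Big),
\]
on which $W\leq\Aut(C_p)$ visibly acts by permuting the summands indexed by $x\in C_p$. This permutation action survives to the Hesselholt--Nikolaus identification $\WhTC(kC_p)\simeq C_p\otimes\Sigma\THH(k)_{h\bbT_p}$, and a short-exact-sequence argument (using that $k[W]$ is semisimple) upgrades the nonequivariant identification with $\K_*(kC_p;\bbZ_p)$ to a $W$-module isomorphism. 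Since $W$ acts freely on $C_p\setminus\{e\}$ with $(p-1)/|W|$ orbits, the coinvariants drop out immediately. This is the missing idea: make the $W$-action geometric (a permutation of wedge summands) rather than algebraic (an action on a polynomial generator), so that no filtration or isotypic bookkeeping is needed.
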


Using \cref{thm:Sylp}, we calculate $p$-adic $K$-groups for group algebras of
\begin{itemize}
    \item the dihedral group $D_n$ for $n$ odd when $p=2$ (\cref{ex:dih2}),
    \item the dihedral group $D_p$ (\cref{ex:dihp}),
    \item the symmetric group $\Sigma_n$ when $n\leq p+1$ (\cref{ex:symp}),
    \item the alternating group $A_n$ when $n\leq p+2$ for odd $p$ (\cref{ex:altp}), 
    \item the affine general linear group $\AGL_1(p)\cong C_p\rtimes \Aut(C_p)$ (\cref{ex:AGL1p}), and
    \item the projective special linear group $\PSL_2(p)$ (\cref{ex:PSL2}).
\end{itemize}
As an illustration, we provide a complete integral calculation for of the K-groups of $\bbF_{p^r}\Sigma_p$ in \cref{cor:FpSp}. In \cref{ssec:bigS}, we state interesting further examples of $p$-isolated groups with larger Sylow $p$-subgroups, including
\begin{itemize}
    \item $A_4$ and $A_5$ at $p=2$ (\cref{ex:A45-2}),
    \item $A_6$ at $p=3$ (\cref{ex:A6-3}),
    \item $\Sigma_4$ at $p=2$ (\cref{ex:S4-2}), and
    \item $A_6$ and $\PSL_2(7)$ at $p=2$ (\cref{ex:A6-PSL27-2}).
\end{itemize}
For these, we cannot obtain explicit calculations of K-groups, but we can spell out how \cref{thm:main} reduces the calculations to $p$-subgroups.

\subsection*{Acknowledgements}

The author would like to thank the following people: Maxine Calle and David Chan for enlightening conversations about equivariant algebraic K-theory; Nir Gadish for suggesting the most economical form of the $p$-isolation condition; Peter Webb for pointing out the connection to the prime graph and Frobenius groups; Varinderjit Mann for help navigating relative categories; Kimball Strong for help navigating quasicategories; Cary Malkiewich for enlightening conversations about symmetric monoidal $\infty$-categories; and Isaac Moselle for sharing his calculations and feedback on an earlier draft. The author would particularly like to thank his advisor Inna Zakharevich for her guidance throughout the preparation of this work.

This work benefitted from a number of electronic tools. The online database \href{https://people.maths.bris.ac.uk/~matyd/GroupNames/}{GroupNames}, maintained by Tim Dokchitser, was convenient resource. The computer algebra system Magma \cite{magma} allowed for the efficient generation of examples.

During the preparation of this work, the author was partially supported by the NSF grant DMS-2052977 ``Trace Methods and Applications for Cut-and-Paste K-theory.''

\section{Singular K-theory as a spectral Green functor} \label{sec:Ksg}

The goal of this section is to prove \cref{thm:Green}, which states that the singular K-theory spectra $\Ksg(kH)$ for $H\leq G$ assemble into a spectral Green functor. We begin by briefly recalling this notion.

Denote by $\cA_G$ the \emph{effective Burnside $\infty$-category} of the finite group $G$ \cite[\S 3]{Barwick:smfeakt}. This is an $\infty$-categorical enhancement of the ordinary span category of finite $G$-sets. One can make sense of Mackey functors in a wide range of contexts by considering functors out of $\cA_G$. The most important example is the notion of a \emph{spectral Mackey functor}, a functor $\cA_G\to\Sp$. When doing K-theory, one constructs \emph{Mackey functors of Waldhausen $\infty$-categories} $\cA_G\to\Wald_\infty$, which yield spectral Mackey functors after postcomposition with the K-theory functor $\K:\Wald_\infty\to\Sp$.

Constructing functors $\cA_G\to\Wald_\infty$ is a delicate matter, as such functors encode an enormous amount of coherence data. Fortunately, work of Barwick provides a path forward. By the straightening/unstraightening correspondence, it suffices to construct a so-called Waldhausen cocartesian fibration over $\cA_G$. Barwick's unfurling construction \cite[\S 11]{Barwick:smfeakt} provides a means of constructing such a cocartesian fibration from that data of a so-called Waldhausen bicartesian fibration over $\Fin_G$. In \cref{ssec:Mackey}, we show that our Waldhausen categories fit into this framework.

Work of Barwick--Glasman--Shah constructs a symmetric monoidal structure on $\cA_G$, making is possible to speak of \emph{spectral Green functors} as monoidal functors $\cA_G^\otimes\to\Sp^\otimes$. As above, one constructs Green functors of Waldhausen $\infty$-categories $\cA_G^\otimes\to\Wald_\infty^\otimes$ then postcomposes with K-theory to obtain spectral Green functors. They show that to exhibit a Green functor of Waldhausen $\infty$-categories, it suffices to exhibit a so-called \emph{symmetric monoidal Waldhausen bicartesian fibration} \cite[\S 7]{Barwick--Glasman--Shah}. We do this in \cref{ssec:Green}, and thereby complete the proof of \cref{thm:Green}.

Throughout this section, the finite group $G$ and field $k$ are fixed. We highlight that the constructions in this section hold for arbitrary $k$, but unless the characteristic of $k$ divides $G$, the stable module category is the zero category.

\subsection{Waldhausen categories of representations} \label{ssec:Wald}

It will be convenient in the later sections to have a category of representations of any finite $G$-set, and not just merely subgroups of $H$.
This notion appears in work of Balmer \cite{Balmer:stacks}. In this section, we construct Waldhausen 1-category structures in the original sense of Waldhausen \cite{Waldhausen}.

\begin{definition} \label[definition]{def:Xrep}
    For a finite $G$-set $X$, the category $\Rep_X$ of \emph{$X$-representations} is the category of functors 
    \[
        X\sslash G\to\Vect(k),
    \]
    where $X\sslash G$ denotes the action groupoid of $X$, and $\Vect(k)$ denotes the category of $k$-vector spaces. An $X$-representation is
    \begin{itemize}
        \item \emph{free} if it is isomorphic to a sum of representable functors, and
        \item \emph{projective} if it is a summand of a free representation.
    \end{itemize} 
    We denote by $\Proj_X$ the full subcategory of those $X$-representations which are projective.
\end{definition}

\begin{notation}
    In general, we use lowercase to denote the the full subcategory spanned by finite-dimensional objects, e.g., $\rep_X = \vect(k)^{X\sslash G}$ is the category of $X$-representations which are pointwise finite-dimensional, and $\proj_X$ the subcategory thereof consisting of the finitely-generated projectives.
\end{notation}

The reader is assured that in the case $X=G/H$, these recover the notions of free and projective representations of the group $H$. Indeed, for any $x\in X$, there is an inclusion
\[
    \iota_x : BH \to X\sslash G
\]  
of the full subgroupoid spanned by $x$. When $X$ has a transitive $G$-action (i.e., $X$ is isomorphic to $G/H$), $\iota_x$ is an equivalence of groupoids, whence we obtain an equivalence of categories
\[
    \iota_x^* : \Rep_{G/H} \to \Vect(k)^{BH}=\Rep_H(k),
\]
where $\Rep_H(k)$ is the usual category of $H$-representations over $k$.

Even if $X$ is not transitive, if $\{x_i\}_{i\in I}$ is a collection of orbit representatives of finite $G$-set $X$, the inclusion of groupoids 
\[
    \coprod_i BG_{x_i} \xrightarrow{\coprod_i\iota_{x_i}} X\sslash G
\]
is similarly an equivalence, whence we obtain an equivalence 
\begin{equation} \label{eqn:basis}
    \Rep_X \xrightarrow{\prod_i \iota_{x_i}^*} \prod_i \Rep_{G_x}(k).
\end{equation}
In this way, we think of representations of finite $G$-sets $X$ as a ``basis-independent'' way of keeping track of collections of representations of subgroups of $G$.

\begin{proposition} \label[proposition]{prop:PRWald}
    For any finite $G$-set $X$, the category $\Proj_X$ and $\Rep_X$ each admit the structure of a Waldhausen category where 
    \begin{itemize}
        \item the cofibrations are the monomorphisms, and 
        \item the weak equivalences are the isomorphisms.
    \end{itemize}
\end{proposition}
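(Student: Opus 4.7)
The plan is to verify Waldhausen's axioms in two stages: first for the abelian category $\Rep_X$, and then transfer the structure to $\Proj_X$ using the Frobenius property of finite-group algebras over fields.

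For $\Rep_X$, I begin with the observation that since $\Vect(k)$ is abelian and limits and colimits of functors are computed pointwise, the functor category $\Rep_X$ is itself abelian; equivalently, via the decomposition $\Rep_X \cong \prod_i \Rep_{G_{x_i}}(k)$ of \cref{eqn:basis}, it is a finite product of module categories. Every abelian category carries a standard Waldhausen structure with monomorphisms as cofibrations and isomorphisms as weak equivalences: the zero functor is a zero object, every isomorphism is a monomorphism, monomorphisms are closed under composition, and pushouts of monomorphisms along arbitrary morphisms exist and remain monomorphisms. The gluing lemma is immediate because weak equivalences are isomorphisms, and pushouts of isomorphisms are isomorphisms.

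For $\Proj_X$, the subcategory is closed in $\Rep_X$ under isomorphisms, contains the zero object, and inherits composition of monomorphisms, so the only nontrivial axiom is to check that the pushout in $\Rep_X$ of a span $R \leftarrow P \hookrightarrow Q$ with $P$, $Q$, $R$ all projective is again projective. The crucial input is that each group algebra $k G_{x_i}$ is a Frobenius algebra, being the group algebra of a finite group over a field, so projective and injective modules coincide in each factor $\Rep_{G_{x_i}}(k)$ and hence in $\Rep_X$. Since $P$ is then injective, the monomorphism $P \hookrightarrow Q$ is split, so I may write $Q \cong P \oplus Q'$ with $Q' \cong Q/P$; this $Q'$ is projective as a direct summand of the projective $Q$, and the pushout identifies with $R \oplus Q'$, which lies in $\Proj_X$ as required.

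The main obstacle is precisely the closure of $\Proj_X$ under pushouts along monomorphisms, which is where the Frobenius structure enters essentially; all the other axioms are either formal or inherited from the ambient abelian category $\Rep_X$. A small bookkeeping remark is that monomorphisms in $\Proj_X$ coincide with monomorphisms in $\Rep_X$ between projective objects, since any nonzero kernel in $\Rep_X$ receives a nonzero map from some projective.
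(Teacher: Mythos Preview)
Your proof is correct. The approach differs from the paper's in presentation: the paper routes through exact category structure (observing that $\Rep_X$ is abelian and $\Proj_X$ is its subcategory of projectives, each carrying an exact structure that induces the stated Waldhausen structure), whereas you verify Waldhausen's axioms directly. The substantive content is the same---the only nontrivial point is closure of $\Proj_X$ under pushouts along monomorphisms---but you make the role of the Frobenius property explicit, while the paper's appeal to an ``analogous exact structure'' on $\Proj_X$ leaves implicit why \emph{every} monomorphism between projectives (not just the split ones) is admissible. Your argument that injectivity of $P$ forces $P\hookrightarrow Q$ to split, whence the cokernel is a summand of $Q$ and therefore projective, is exactly what is needed to justify that claim; the paper only invokes the Frobenius property later, when constructing the stable module category. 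Your direct approach is more self-contained; the paper's is more conceptual but relies on the reader supplying this step.
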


\begin{proof}
    As an abelian category, $\Rep_X$ admits the structure of an exact category where any monomorphism is admissible. As the subcategory of projective objects in the abelian category $\Rep_X$, $\Proj_X$ admits an analogous exact structure. These two exact structures provide the stated Waldhausen structures by taking the weak equivalences to be the isomorphisms.
\end{proof}

We now describe the functoriality of the categories $\Rep_X$ and $\Proj_X$ in the $G$-set $X$. For a equivariant map $f:X\to Y$, there is a pullback functor $f^*:\Rep_Y\to\Rep_X$ given by precomposition with $f$. The functor $f^*$ admits a left adjoint we denote by $f_!:\Rep_X\to\Rep_Y$ \cite[Proposition 6.13]{Balmer:stacks}. For $V\in\Rep_X$, the $Y$-representation $f_!V$ is given by 
\[
    (f_!V)(y) = \bigoplus_{x\in f\nv(y)} V(x).
\]
In the case $f:G/H\to G/K$ is a map of transitive $G$-sets, $f^*$ and $f_!$ recover the familiar restriction and induction functors $\Res_H^K$ and $\Ind_H^K$, respectively. The functors $f^*$ and $f_!$ each preserve projectives, and thus the adjunction $f_!\dashv f^*$ restricts to the subcategories of projectives. 

\begin{proposition} \label[proposition]{prop:PRexact}
    For any map of finite $G$-sets $f:X\to Y$, the functors $f^*$ and $f_!$ are exact functors of Waldhausen categories with respect to the Waldhausen category structures of \cref{prop:PRWald}
\end{proposition}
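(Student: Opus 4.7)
The plan is to verify the four conditions defining an exact functor of Waldhausen categories in the sense of Waldhausen: (i) preservation of the zero object, (ii) preservation of cofibrations (here, monomorphisms), (iii) preservation of weak equivalences (here, isomorphisms), and (iv) preservation of pushouts along cofibrations. I will argue these for $f^*$ and $f_!$ separately, treating $\Rep_X$ and $\Rep_Y$ first and then noting that the restriction to $\Proj_X$ and $\Proj_Y$ is immediate from the fact, already recorded in the paragraph before \cref{prop:PRexact}, that each functor preserves projectivity.

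For $f^*$, the key observation is that $f^*$ is computed pointwise, i.e.\ $(f^*W)(x) = W(f(x))$. Since $\Vect(k)$ is abelian, limits and colimits in $\Rep_Y = \Vect(k)^{Y\sslash G}$ and $\Rep_X = \Vect(k)^{X\sslash G}$ are computed pointwise, so $f^*$ preserves all finite limits and colimits. In particular, it sends the zero representation to the zero representation, monomorphisms to pointwise injections (hence monomorphisms), isomorphisms to isomorphisms, and pushouts to pushouts. This handles $f^*$ with essentially no work.

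For $f_!$, I use a combination of the adjunction $f_! \dashv f^*$ and the explicit formula $(f_!V)(y) = \bigoplus_{x\in f^{-1}(y)} V(x)$. As a left adjoint, $f_!$ preserves all colimits; this takes care of the zero object (an empty colimit) and of pushouts along cofibrations. For preservation of isomorphisms, there is nothing to check since any functor preserves isomorphisms. The remaining point, preservation of monomorphisms, is the one condition not automatic from general abstract nonsense. Here the explicit formula does the job: if $V\hookrightarrow V'$ is pointwise injective, then for each $y\in Y$ the map $(f_!V)(y)\to (f_!V')(y)$ is a direct sum of injections indexed by the finite fibre $f^{-1}(y)$, hence injective.

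Finally, to pass to $\Proj_X$ and $\Proj_Y$: the Waldhausen structure on $\Proj_X$ was defined in \cref{prop:PRWald} as the restriction of the ambient structure on $\Rep_X$, so each of the four conditions for exactness is inherited once we know that $f^*$ and $f_!$ carry projectives to projectives, which is the content of the preceding discussion. No step here is genuinely hard; the only non-formal input is the explicit fibrewise direct sum formula used to check monomorphism preservation for $f_!$, and I would present that as the single substantive computation.
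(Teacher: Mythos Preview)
Your proof is correct and follows essentially the same approach as the paper: for $f^*$, exactness at the abelian-category level handles everything, and for $f_!$, colimit preservation comes from the left adjoint property while monomorphism preservation comes from the explicit fibrewise direct-sum formula. Your version is simply more detailed, spelling out the zero object, isomorphisms, and the passage to projectives explicitly where the paper leaves these implicit.
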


\begin{proof}
    The pullback functor $f^*$ is exact as a functor of abelian categories, and thus preserves monomorphisms and pushouts. Given the above description as a sum over preimages, the pushforward $f_!$ preserves monomorphisms, and additionally preserves pushouts as a left adjoint.
\end{proof}

We now turn to constructing a Waldhausen category presenting the stable module category. Say that a map $\varphi:V\to W$ in $\Rep_X$ is a \emph{stable equivalence} if there exists a \emph{stable inverse} $\tilde\varphi:W\to V$ such that $\varphi\tilde\varphi-\id_W$ and $\tilde\varphi\varphi-\id_V$ each factor through projectives. We denote by $\cW_X$ the collection of stable equivalences in $\Rep_X$. 

We briefly digress to show that even more structure holds than Waldhausen category structure. The category $\Rep_X$ is a \emph{Frobenius} abelian category for any $X$, meaning that the classes of injective and projective objects coincide \cite[Example 6.5]{Balmer:stacks}. In his work on cotorsion pairs, Hovey \cite{Hovey:cotorsion} showed that such a category has a model structure in the sense of Quillen where 
\begin{itemize}
    \item the cofibrations are the monomorphisms,
    \item the fibrations are the epimorphisms, and
    \item the weak equivalences are the stable equivalences.
\end{itemize} 
We obtain the following by forgetting structure (cf. \cite[Lemma 3.1]{Dugger--Shipley:derived-equivalence}).

\begin{proposition} \label[proposition]{prop:SWald}
    For any finite $G$-set $X$, the category $\Rep_X$ admits the structure of a Waldhausen category where 
    \begin{itemize}
        \item the cofibrations are the monomorphisms, and 
        \item the weak equivalences are the stable equivalences $\cW_X$.
    \end{itemize}
\end{proposition}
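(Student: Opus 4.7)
The plan is to obtain the Waldhausen structure by forgetting structure from Hovey's model structure on $\Rep_X$ as a Frobenius abelian category, as the paragraph immediately preceding the proposition already suggests. First, I would verify that $\Rep_X$ is Frobenius abelian for each finite $G$-set $X$. Via the product decomposition $\Rep_X\simeq \prod_i \Rep_{G_{x_i}}(k)$ of \cref{eqn:basis}, this reduces to the classical fact that $\Rep_H(k)$ is a Frobenius abelian category for any finite group $H$: projective and injective $kH$-modules coincide, and there are enough of each. Hovey's theorem \cite{Hovey:cotorsion} then supplies a model structure on $\Rep_X$ whose cofibrations are the monomorphisms, fibrations are the epimorphisms, and weak equivalences are the stable equivalences $\cW_X$.

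Next, I would verify the Waldhausen axioms for cofibrations $=$ monomorphisms and weak equivalences $=\cW_X$. The zero object is the zero representation. The monomorphisms in any abelian category contain all isomorphisms and all maps from the zero object, are closed under composition, and are closed under pushout along arbitrary morphisms; the last of these is a standard property of abelian categories. The class $\cW_X$ contains all isomorphisms by definition, and is closed under composition: given stable equivalences $\varphi:V\to W$ and $\psi:W\to U$ with stable inverses $\tilde\varphi$ and $\tilde\psi$, the composite $\tilde\varphi\tilde\psi$ is a stable inverse to $\psi\varphi$, because the class of morphisms factoring through a projective forms a two-sided ideal and hence the defect terms again factor through projectives.

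The main obstacle is the gluing axiom, which requires that in a commutative ladder of pushout squares whose horizontal left legs are monomorphisms and whose three vertical maps lie in $\cW_X$, the induced map on pushouts again lies in $\cW_X$. This is where the model structure is essential, and the reason one does not try to argue by hand. Every object of $\Rep_X$ is cofibrant in Hovey's model structure, since $0\to V$ is a monomorphism for every $V$, so the standard gluing lemma for a model category in which every object is cofibrant supplies the required conclusion. This is precisely the shape of argument recorded in \cite[Lemma 3.1]{Dugger--Shipley:derived-equivalence}, which produces a Waldhausen structure from a model structure by forgetting the fibrations, and completes the verification.
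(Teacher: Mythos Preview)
Your proposal is correct and follows essentially the same approach as the paper: the paper's proof is the paragraph immediately preceding the statement, which invokes the Frobenius property of $\Rep_X$, Hovey's model structure, and the forgetting-to-Waldhausen argument of \cite[Lemma~3.1]{Dugger--Shipley:derived-equivalence}. The only cosmetic difference is that the paper cites \cite[Example~6.5]{Balmer:stacks} directly for the Frobenius property rather than reducing via \cref{eqn:basis}, and it does not spell out the individual Waldhausen axioms as you do.
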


\begin{proposition} \label[proposition]{prop:Sexact}
    For any map of finite $G$-sets $f:X\to Y$, the functors $f^*$ and $f_!$ are exact functors with respect to the Waldhausen category structures of \cref{prop:SWald}
\end{proposition}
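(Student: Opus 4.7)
The plan is to reduce to what has already been established in Proposition \ref{prop:PRexact} and then check the one new ingredient, namely that $f^*$ and $f_!$ send stable equivalences to stable equivalences. Indeed, the cofibrations in the Waldhausen structures of \cref{prop:PRWald} and \cref{prop:SWald} agree (monomorphisms in both cases), and exactness of $f^*$ and $f_!$ with respect to those cofibrations — preservation of the zero object, of monomorphisms, and of pushouts along monomorphisms — has already been verified in \cref{prop:PRexact}. So the only obligation left is to show $f^*(\cW_X)\subseteq \cW_Y$ and $f_!(\cW_X)\subseteq \cW_Y$ (with the appropriate domains/codomains).

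For this I would unpack the definition of a stable equivalence directly. Given $\varphi:V\to W$ in $\cW_X$, choose a stable inverse $\tilde\varphi:W\to V$ together with factorizations of $\varphi\tilde\varphi-\id_W$ and $\tilde\varphi\varphi-\id_V$ through projective representations $P$ and $Q$. Both $f^*$ and $f_!$ are additive (the former by precomposition, the latter as a left adjoint) and both preserve projectives, as recorded in the paragraph preceding \cref{prop:PRexact}. Consequently $f^*(\varphi)\cdot f^*(\tilde\varphi)-\id_{f^*W}=f^*(\varphi\tilde\varphi-\id_W)$ factors through the projective $f^*P$, and symmetrically for $\tilde\varphi\varphi-\id_V$, so $f^*\tilde\varphi$ is a stable inverse to $f^*\varphi$; the same computation with $f_!$ in place of $f^*$ completes the argument. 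There is no real obstacle here: once one notices that the definition of stable equivalence is preserved by any additive functor that preserves projectives, the proof is formal, and both properties are already in hand.
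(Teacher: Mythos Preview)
Your argument is correct and is essentially the paper's own proof, just with the details unpacked: the paper likewise defers to \cref{prop:PRexact} for the cofibration and pushout conditions and then observes that $f^*$ and $f_!$ preserve stable equivalences because they preserve projectives (and hence the property of factoring through one). One small slip: since $f^*:\Rep_Y\to\Rep_X$, the inclusion you want is $f^*(\cW_Y)\subseteq\cW_X$, not $f^*(\cW_X)\subseteq\cW_Y$; your parenthetical shows you already suspected this.
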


\begin{proof}
    In light of the proof of \cref{prop:PRexact}, what is left to show is that these functors preserve stable equivalences. This follows from the fact that both $f^*$ and $f_!$ preserve projectives, and thus preserve the property of factoring through a projective.
\end{proof}

We call the Waldhausen category resulting from \cref{prop:SWald} the \emph{stable module category} $\StMod_X$. In summary, there is a sequence of Waldhausen categories and exact functors
\[
    \Proj_X \to \Rep_X \to \StMod_X
\]
associated to any finite $G$-set $X$. When taking K-theory, we pass to the full subcategories finite-dimensional objects. Sarazola has shown that this yields a cofiber sequence.

\begin{proposition}[{\cite[Example 8.6]{Maru:cotorsion}}] \label[proposition]{Maru}
    For any finite $G$-set $X$, there is a cofiber sequence of K-theory spectra 
    \[
        \K(\proj_X) \to \K(\rep_X) \to \K(\stmod_X).
    \]
\end{proposition}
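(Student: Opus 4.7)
The plan is to exhibit this as a direct application of Sarazola's localization theorem for K-theory of model categories coming from cotorsion pairs. The key input has already been assembled in the preceding discussion: $\rep_X$ is a Frobenius abelian category (with injectives and projectives both equal to $\proj_X$), and by Hovey's theorem on cotorsion pairs it carries a model structure whose weak equivalences are the stable equivalences $\cW_X$ and whose trivially cofibrant objects are precisely the projectives $\proj_X$. The Waldhausen category $\stmod_X$ from \cref{prop:SWald} is obtained by forgetting the fibration structure, and its K-theory computes the K-theory of the associated stable module category.

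Given this setup, I would invoke \cite[Example 8.6]{Maru:cotorsion} directly. Sarazola's general result produces, from a compatibly structured abelian/Frobenius category with a Hovey-style cotorsion pair, a cofiber sequence of K-theory spectra of the form
\[
    \K(\text{trivially cofibrant objects}) \to \K(\text{abelian category}) \to \K(\text{homotopy category}),
\]
where the last term is computed as the K-theory of the Waldhausen category obtained by keeping the monomorphisms as cofibrations and the stable equivalences as weak equivalences. In our situation, the three terms are exactly $\K(\proj_X)$, $\K(\rep_X)$, and $\K(\stmod_X)$, yielding the desired cofiber sequence.

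To apply Sarazola's result I only need to check that the Hovey model structure on $\rep_X$ fits the hypotheses of her example, essentially that the cotorsion pair $(\proj_X, \rep_X)$ is complete and hereditary in her sense and that the trivially cofibrant class coincides with $\proj_X$. The first two properties are standard in the Frobenius setting (any monomorphism admits a projective cover as a cokernel, since projectives are also injective), and the third is by construction of Hovey's model structure. The passage to finite-dimensional representations is compatible because induction and restriction along the inclusions of full subcategories preserve both the exact structure and the projectivity condition.

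The only potentially delicate point is confirming that Sarazola's theorem, which is formulated for reasonable exact/Waldhausen categories, applies to the finite-dimensional subcategories $\proj_X\subseteq\rep_X$ rather than their cocomplete ambient versions. This is the step I would expect to require the most care, though since $\rep_X$ decomposes via \cref{eqn:basis} into a finite product of ordinary finite-dimensional representation categories $\rep_{G_{x_i}}(k)$ — which are standard examples in the cotorsion-pair literature — the verification reduces to the classical case of group algebras, where the result is known.
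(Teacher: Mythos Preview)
Your proposal is correct and matches the paper's approach: the paper does not give a proof of this proposition at all, but simply cites \cite[Example 8.6]{Maru:cotorsion} as the source, relying on the Frobenius structure on $\rep_X$ and Hovey's model structure already recorded in the discussion preceding \cref{prop:SWald}. Your write-up spells out the verification of Sarazola's hypotheses in more detail than the paper does, but the underlying argument is the same.
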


In the case where $X$ is the transitive orbit $G/H$, this recovers the Cartan cofiber sequence of spectra $\K(kH)\to \G(kH)\to \K(\stmod(kH))$ for the group $H$.

\subsection{Mackey functoriality} \label{ssec:Mackey}

In this section, we stitch the categories of the previous section together into the data of Waldhausen bicartesian fibrations, in the sense of Barwick \cite[Definition 10.3]{Barwick:smfeakt}. As a first step, we translate from the Waldhausen 1-categories into quasicategories via relative nerves. 

The particular relative nerve we employ is based on marked simplicial sets. A \emph{marked simplicial set} is a simplicial set equipped with a collection of marked edges containing all degenerate edges. These form a category $\sSet^+$ where the morphisms are those simplicial maps which preserve the markings. A special case of Lurie's cartesian model structure endows $\sSet^+$ with a model structure in which the functor forgetting markings $\sSet^+\to\sSet$ is a right Quillen equivalence to the Joyal model structure \cite[Proposition 3.1.5.3]{HTT}. We use that for any 1-category $\cC$ equipped with a collection of weak equivalences $\cW$, the fibrant replacement of $(\N\cC,\cW)$ in the model structure on $\sSet^+$ yields a quasicategory $\N\cC[\cW\nv]$, the localization of $\cC$ at the collection $\cW$. 

Barwick has introduced the notion of a \emph{Waldhausen $\infty$-category}. This is a \emph{pair} consisting of an $\infty$-category and a subcategory specifying the cofibrations (or in his terminology, ingressive morphisms), satisfying axioms analogous to Waldhausen's definition \cite[Definition 2.7]{Barwick:Waldhausen}. Associated to any Waldhausen 1-category $\cC$ with cofibrations $c\cC$ and weak equivalences $w\cC$, there is a Waldhausen $\infty$-category structure on the relative nerve $\N\cC[w\cC\nv]$, where the subcategory of cofibrations is the smallest subcategory containing $c\cC$ \cite[Example 2.12]{Barwick:Waldhausen}. Barwick shows that his K-theory construction for this induced Waldhausen $\infty$-cateogry agrees with Waldhausen's definition \cite[Corollary 10.16]{Barwick:Waldhausen}.

We define three functors $P,R,S:\Fin_G^\op\to\sSet^+$ by assigning 
\begin{align*}
    P(X) &= (\N\proj_X,\textrm{iso.}), \\
    R(X) &= (\N\rep_X,\textrm{iso.}), \\
    S(X) &= (\N\rep_X,\cW_X),
\end{align*}
to any finite $G$-set, and assigning the pullback functors $f^*$ to any morphism $f$ of finite $G$-sets. As objects in $(\sSet^+)^{\Fin_G^\op}$, $P$ and $R$ are fibrant in the projective model structure, as they are levelwise fibrant. Indeed, the marked edges are already equivalences. In the case of $S$, fibrant replacement yields a functor $\wt S$ with
\[
    \wt S(X) = (\N\Rep_X[\cW_X\nv],\cW_X).
\]

A special case of the straightening/unstraightening correspondence is given by Lurie's \emph{marked relative nerve} functor \cite[Definition 3.2.5.12]{HTT} A contravariant version of the marked relative nerve gives a right Quillen equivalence 
\[
    \N^+_{(-)}\Fin_G :(\sSet^+)^{\Fin_G^\op} \to \sSet^+_{/\N\Fin_G}
\]
from the projective model structure on $(\sSet^+)^{\Fin_G^\op}$ to the cartesian model structure on the overcategory $\sSet^+_{/\N\Fin_G}$ \cite[Proposition 3.2.5.18(2)]{HTT}. By applying the marked relative nerve to the functors $P$, $R$, and $\wt S$, we obtain quasicategories over $\N\Fin_G$ we denote by
\begin{align*}
    \cP &= \N^+_P\Fin_G \xrightarrow{u_\cP} \N\Fin_G, \\
    \cR &= \N^+_R\Fin_G \xrightarrow{u_\cR} \N\Fin_G, \\
    \cS &= \N^+_{\wt S}\Fin_G \xrightarrow{u_\cS} \N\Fin_G.
\end{align*}
Since the marked relative nerve is right Quillen, these are fibrant in the cartesian model structure on marked simplicial sets over $\N\Fin_G$. As such, they are each cartesian fibrations over $\N\Fin_G$ \cite[Proposition 3.1.4.1]{HTT}. 

We reintroduce the Waldhausen category structure by equipping $\cP$, $\cR$, and $\cS$ with the pair structure in which the ingressive morphisms are morphisms which are cofibrations in the Waldhausen category structure on each fibrer. We turn to showing this data suffices to present Mackey functors of Waldhausen categories. This entails showing that our cartesian fibrations satisfy Barwick's definition, which we now present. 

While Barwick defines Mackey functors with respect to any \emph{disjunctive triple}, we are only interested in the case of $\Fin_G$ here, which simplifies some of the conditions. We follow the numbering of the original.

\begin{definition}[{\cite[Definition 10.3]{Barwick:smfeakt}}] 
    \label[definition]{def:Wbicart}
    A functor of pairs $u:\cX\to\N\Fin_G^\flat$ is a \emph{Waldhausen bicartesian fibration} provided it satisfies the following properties.
    \begin{definitionenum}
        \item \label[definitionenumi]{def:WB1}
        The functor $u$ is a cocartesian fibration. 
        \item \label[definitionenumi]{def:WB2}
        The functor $u$ is a cartesian fibration.
        \item \label[definitionenumi]{def:WB3}
        The \emph{Beck--Chevalley condition} holds: for any pullback square 
        \[\begin{tikzcd}
            X \arrow[r,"i"] \arrow[d,"q"'] & X' \arrow[d,"r"] \\
            Y \arrow[r,"j"'] & Y'
        \end{tikzcd}\]   
        of finite $G$-sets, the natural map $i_!q^* \to r^*j_!$ is an equivalence.
        \setcounter{definitionenumi}{4}
        \item \label[definitionenumi]{def:WB5}
        For any map $f:X\to Y$ of $G$-sets, the pushforward functor $f_!:\cX_X\to\cX_Y$ is an exact functor of Waldhausen $\infty$-categories.
        \item \label[definitionenumi]{def:WB6}
        For any map $f:X\to Y$ of $G$-sets, the pullback functor $f^*:\cX_Y\to\cX_X$ is an exact functor of Waldhausen $\infty$-categories.
        \item \label[definitionenumi]{def:WB7}
        For any finite set $I$ and $I$-indexed collection $X_i$ of finite $G$-sets with coproduct $X=\coprod_I X_i$, the pullback functors
        \[
            j_i^*: \cX_X \to \cX_{X_i}
        \]
        along the inclusions $j_i:X_i\to X$ together exhibit $\cX_X$ as the direct sum $\bigoplus_I \cX_{X_i}$.
    \end{definitionenum}
\end{definition}

\begin{proposition} \label[proposition]{prop:Wbicart}
    Each of the cartesian fibrations $\cP$, $\cR$, and $\cS$ over $\N\Fin_G$ is a Waldhausen bicartesian fibration.
\end{proposition}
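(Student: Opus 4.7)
The plan is to verify each of the seven axioms in \cref{def:Wbicart}, most of which follow immediately from the 1-categorical work in \cref{ssec:Wald}. Conditions \cref{def:WB5} and \cref{def:WB6}, the exactness of the pushforward and pullback functors, are precisely the contents of \cref{prop:PRexact} and \cref{prop:Sexact}, once the Waldhausen $\infty$-category structure on each fiber is induced via \cite[Example 2.12]{Barwick:Waldhausen}. Condition \cref{def:WB2}, that each of $u_\cP$, $u_\cR$, $u_\cS$ is a cartesian fibration, is immediate from their construction as marked relative nerves of projectively fibrant diagrams, as recalled in the paragraph preceding the statement.

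For \cref{def:WB7}, the inclusions $X_i \hookrightarrow X$ of a finite coproduct of $G$-sets induce an equivalence of action groupoids $\coprod_i X_i\sslash G \xrightarrow{\sim} X\sslash G$, from which precomposition yields an equivalence of representation categories $\Rep_X \xrightarrow{\sim} \prod_i \Rep_{X_i}$, and similarly for $\Proj_X$ and $\StMod_X$. Since each of these categories is additive, the finite product coincides with the direct sum. For \cref{def:WB3}, the Beck--Chevalley condition, one uses the explicit formula $(f_!V)(y)=\bigoplus_{x\in f\nv(y)} V(x)$. Given a pullback square as in the definition, the comparison map $i_!q^*V\to r^*j_!V$ evaluated at $y'\in Y'$ matches the sums indexed by $i\nv(y')$ and $j\nv(r(y'))$, which are in canonical bijection by the universal property of the pullback square.

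The principal step is \cref{def:WB1}, which asks that each fibration be cocartesian as well. This follows from the adjoint criterion for bicartesian fibrations \cite[\S 5.2.2]{HTT}: a cartesian fibration is also cocartesian precisely when each pullback functor admits a left adjoint. The required left adjoints $f_!$ at the Waldhausen 1-category level were constructed in \cref{ssec:Wald}, so for $\cP$ and $\cR$ the criterion applies directly. For $\cS$, one additionally uses \cref{prop:Sexact}: since $f_!$ preserves stable equivalences, it descends to the localization $\N\Rep_X[\cW_X\nv]$ and remains left adjoint to $f^*$ there.

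The main obstacle I anticipate is arranging the cocartesian structure on $\cS$, where the fiberwise left adjoints must be shown to lift coherently through the fibrant replacement defining $\wt S$ and to be compatible with the marked edges. With \cref{prop:Sexact} in hand, this amounts to a bookkeeping exercise, and the remaining verifications are direct translations of the 1-categorical statements already established in \cref{ssec:Wald}.
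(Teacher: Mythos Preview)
Your proposal is correct and follows essentially the same route as the paper: verify the conditions of \cref{def:Wbicart} by reducing to the 1-categorical statements of \cref{ssec:Wald}, invoking the adjoint criterion (the paper cites \cite[Proposition 4.1.4.17(1)]{HA}) to upgrade the cartesian fibrations to cocartesian ones. The only substantive difference is that the paper outsources the Beck--Chevalley condition to Balmer \cite[Lemma 7.10]{Balmer:stacks}, whereas you verify it directly from the explicit formula for $f_!$; your argument is fine, though note that both $i_!q^*V$ and $r^*j_!V$ live in $\Rep_{X'}$, so the evaluation point should be an element of $X'$ rather than of $Y'$.
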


\begin{proof}
    We address each of the conditions in \cref{def:Wbicart}. 

    By construction, all three are cartesian fibrations, which gives \cref{def:WB2}. Additionally, all three are cocartesian fibrations, because each of the pullback maps $f^*$ admit left adjoints (cf. \cite[Proposition 4.1.4.17(1)]{HA}), which gives condition \cref{def:WB1}. Balmer has shown the Beck--Chevalley condition (\cref{def:WB3}) holds for the categories $\rep_X$ and $\stmod_X$ \cite[Lemma 7.10]{Balmer:stacks}, and it additionally holds for the categories $\proj_X$ since the (co)units of the adjunctions restrict to the category of projectives. The conditions \cref{def:WB5,def:WB6} follow from \cref{prop:PRexact} for $\cP$ and $\cR$, and from \cref{prop:Sexact} for $\cS$. Lastly, we address \cref{def:WB7}. The formation of action groupoids from finite $G$-sets preserves coproducts, so we have
    \begin{align*}
        \rep_X = \Fun(X\sslash G, \vect(k)) 
        \simeq \Fun\Big( \coprod_{i\in I} X_i \sslash G , \vect(k) \Big)
        \simeq \prod_{i\in I} \Fun(X_i \sslash G, \vect(k))
        = \prod_{i\in I} \rep_{X_i}.
    \end{align*} 
    The cases of $\cP$ and $\cS$ are analogous.
\end{proof}

Barwick shows that Waldhausen bicartesian fibrations yield spectral Mackey functors after applying his unfurling construction \cite[\S 11.8]{Barwick:smfeakt}. This associates to a Waldhausen bicartesian fibration $u:\cX\to\N\Fin_G$ its \emph{unfurling}
\[
    \Upsilon(u): \Upsilon(\cX/\N\Fin_G) \to \cA_G
\]
whose straightening is the Mackey functor 
\[
    \mathbf{M}(u): \cA_G \to \Wald_\infty.
\]
We denote the spectral Mackey functors resulting from our above fibrations by 
\begin{align*}
    \K_G(k) &= \K\circ \mathbf{M}(u_\cP) \\
    \K^G(k) &= \K\circ \mathbf{M}(u_\cR) \\
    \Ksg_G(k) &= \K\circ \mathbf{M}(u_\cS)
\end{align*}
The resulting sequence of spectral Mackey functors
\begin{equation} \label{eqn:cofiber}
    \K_G(k) \to \K^G(k) \to \Ksg_G(k),
\end{equation}
is a cofiber sequence, since it is a cofiber sequence pointwise by \cref{Maru}. This is a lift of the Cartan cofiber sequence to a cofiber sequence of spectral Mackey functors.

\subsection{Green functoriality} \label{ssec:Green}

In this section, we promote $u_\cR$ and $u_\cS$ to have the structure of symmetric monoidal Waldhausen bicartesian fibrations. In the case of $u_\cR$ this structure is not new; it is a special case of the ``K-theory of group actions'' which Barwick--Glasman--Shah prove Green functor structure for in their work \cite[Proposition 8.2]{Barwick--Glasman--Shah}. 

We will need an alternate indentifaction of the marked relative nerve $\cR=\N^+_R\Fin_G$. By applying the 1-categorical Grothendieck construction to the functor $R:\Fin_G^\op\to\Cat$, we obtain a cartesian fibration $\rep\to\Fin_G$ whose fiber over $X$ is exactly the category $\rep_X$ described above. Explicitly, the category $\rep$ has 
\begin{itemize}
    \item objects: pairs $(X,V)$ consisting of a finite $G$-set $X$ with a finite-dimensional $X$-representation $V:X\sslash G\to \vect(k)$, and
    \item morphisms $(X,V)\to(Y,W)$: pairs $(f,\varphi)$ consisting of an equivariant map $f:X\to Y$ and a map of $X$-representations $\varphi : V \to f^*W$.
\end{itemize}
In the construction of $\cR$, we only marked the isomorphisms of the 1-categories $\rep_X$, so there is an isomorphism $\N\rep\cong \cR=\N^+_R\Fin_G$.

We use this identification to import monoidal structure. Indeed, each of the fibers $\rep_X$ admits a symmetric monoidal structure via the pointwise product given by
\[
    (V\otimes W)(x) = V(x)\otimes W(x), \quad x\in X.
\]
With a choice of orbit representatives $\{x_i\}_{i\in I}$ of $X$, the equivalence 
\[
    \rep_X \to \prod_{i\in I} \rep_{G_{x_i}}(k)
\]
of \cref{eqn:basis} is monoidal, where each category $\rep_H(k)$ has the monoidal structure coming from $\otimes_k$. The total category $\rep$ admits a symmetric monoidal structure $\boxtimes$ given by
\[
    (X,V) \boxtimes (Y,W) = (X\times Y, \pi_1^*V \otimes \pi_2^*W),
\]
where $\pi_1$ and $\pi_2$ are the projections of $X\times Y$ onto $X$ and $Y$, respectively. We refer to $\boxtimes$ as the \emph{external product}. The aforementioned \emph{internal product} on each of the fibers $\rep_X$ is recovered via the diagonal map as the composite
\[
    \rep_X \times \rep_X \xrightarrow{\boxtimes} \rep_{X\times X} \xrightarrow{\Delta^*} \rep_X.
\]
With the external product, the functor $\rep\to\Fin_G$ is symmetric monoidal. This makes $\rep$ a symmetric monoidal bifibration in the sense of Shulman \cite{Shulman:monfib}.

By the construction of $\cS$ as a fiberwise localization of $\cR$, it follows (cf. the opposite of \cite[\href{https://kerodon.net/tag/02LW}{Tag 02LW}]{kerodon}) that 
\[
    \cS\simeq\cR[\cW\nv]=\N\rep[\cW\nv]
\]
where $\cW=\coprod_X \cW_X$ is the collection of fiberwise stable equivalences in $\rep$. Our next task to show that this localization is in fact a monoidal localization. We prove some lemmas towards this end.

\begin{lemma} \label[lemma]{lem:projbox}
    For any $X\in\Fin_G$, if $U$ is a projective $X$-representation and $V$ is any $X$-representation, then $U\otimes V$ is projective.
\end{lemma}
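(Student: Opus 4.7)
The plan is to reduce the statement to the classical fact that for any finite group $H$, the tensor product over $k$ (with diagonal action) of a projective $kH$-module with any $kH$-module is again projective.

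First, I would decompose $X$ into its $G$-orbits and choose orbit representatives $\{x_i\}_{i\in I}$. The equivalence \cref{eqn:basis} is symmetric monoidal, as recorded immediately above, and it also identifies projectives: the representable $X$-representation at $x_i$ corresponds to the regular representation $kG_{x_i}$ in the $i$-th factor (and zero elsewhere), so free $X$-representations match direct sums of regulars across the various factors, and projectivity is preserved by passing to summands on either side. It therefore suffices to prove the statement in each factor $\rep_H(k)$ separately, where $H = G_{x_i}$.

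Within $\rep_H(k)$, tensoring with a fixed module commutes with direct sums and preserves summands, while projectives are by definition summands of free modules. Hence it is enough to treat the case $U = kH$.

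The final step is the classical $kH$-module isomorphism
\[
    kH \otimes_k V \xrightarrow{\sim} kH \otimes_k V_0, \qquad h\otimes v \mapsto h\otimes h\nv v,
\]
where $V_0$ denotes the $k$-vector space $V$ with trivial $H$-action, the left-hand side carries the diagonal action, and on the right $H$ acts only by left multiplication on the first factor. The right-hand side is a free $kH$-module of rank $\dim_k V$, so $kH\otimes_k V$ is free and hence projective. Transporting the result back through the equivalence \cref{eqn:basis} completes the proof. No serious obstacle is anticipated; the content of the lemma is essentially that the monoidal equivalence reduces it to this standard fact from modular representation theory.
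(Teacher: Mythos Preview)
Your proof is correct and follows essentially the same route as the paper: reduce via the monoidal equivalence \cref{eqn:basis} to a single group $H$, pass from projective to free to the regular representation $kH$, and then show $kH\otimes_k V$ is free. The only cosmetic difference is that the paper phrases the final step as an instance of Frobenius reciprocity, $V\otimes\Ind_e^H(k)\cong\Ind_e^H(\Res_e^H V\otimes k)$, while you write down the explicit untwisting map $h\otimes v\mapsto h\otimes h^{-1}v$; these are the same isomorphism.
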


\begin{proof}
    Since tensor products preserve summands, it suffices to show that if $U$ is free, then $U\otimes V$ is free. By choosing orbit representatives as in \cref{eqn:basis}, it suffices to prove the corresponding statement in $\Rep_H(k)$ for any $H\leq G$. There, the free representations are the sums of copies of the regular representation $kH\cong\Ind_e^H(k)$. The result follows then from Frobenius reciprocity:
    \[
        V\otimes\Ind_e^H(k) \cong \Ind_e^H(\Res_e^H(V)\otimes k)
    \]
    is free, as a representation induced from the trivial group.
\end{proof}

\begin{lemma} \label[lemma]{lem:boxtimes}
    The external product $\boxtimes$ on $\rep$ preserves the class of stable equivalences separately in each variable.
\end{lemma}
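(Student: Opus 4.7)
The plan is to unpack the external product on morphisms into a fiberwise tensor product and then invoke \cref{lem:projbox} together with the fact that pullback preserves projectives. By the symmetry of $\boxtimes$, it suffices to fix $(Y,W)\in\rep$ and show that for any stable equivalence $\varphi: V\to V'$ in some fiber $\rep_X$, the morphism $\varphi\boxtimes\id_{(Y,W)}$ lies in $\cW$. Tracking through the definition of $\boxtimes$, this is the map
\[
    \pi_1^*\varphi \otimes \id_{\pi_2^*W} : \pi_1^*V\otimes\pi_2^*W \to \pi_1^*V'\otimes\pi_2^*W
\]
in the fiber $\rep_{X\times Y}$.

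Next, I would pick a stable inverse $\tilde\varphi: V'\to V$ to $\varphi$ and argue that $\pi_1^*\tilde\varphi\otimes\id_{\pi_2^*W}$ serves as a stable inverse to $\pi_1^*\varphi\otimes\id_{\pi_2^*W}$. One composite of these two maps differs from the identity on $\pi_1^*V'\otimes\pi_2^*W$ by $\pi_1^*(\varphi\tilde\varphi-\id_{V'})\otimes\id_{\pi_2^*W}$. By hypothesis, $\varphi\tilde\varphi-\id_{V'}$ admits a factorization $V'\to P\to V'$ through a projective $X$-representation $P$; pulling this factorization back along $\pi_1$ and tensoring with $\id_{\pi_2^*W}$ exhibits the difference as a map factoring through $\pi_1^*P\otimes\pi_2^*W$. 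The symmetric argument handles the other composite with a factorization through $\pi_1^*Q\otimes\pi_2^*W$ for some projective $Q$.

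The remaining step is to verify that these two objects are projective in $\rep_{X\times Y}$. Since $P$ is projective in $\rep_X$ and pullback preserves projectives (as noted in the paragraph preceding \cref{prop:PRexact}), $\pi_1^*P$ is a projective $(X\times Y)$-representation, and \cref{lem:projbox} then guarantees that $\pi_1^*P\otimes\pi_2^*W$ is projective as well. The same argument applies to $\pi_1^*Q\otimes\pi_2^*W$, completing the proof.

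The argument is essentially a bookkeeping exercise, and the only real input is \cref{lem:projbox}, without which one could not conclude that the difference terms land in the class of maps factoring through projectives. The only point worth taking any care with is the identification of $\varphi\boxtimes\id_{(Y,W)}$ with a morphism in the single fiber $\rep_{X\times Y}$, since the external product operates a priori on the total category $\rep$.
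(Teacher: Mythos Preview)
Your proof is correct and follows the same approach as the paper: both reduce to showing that maps factoring through projectives are preserved under $\boxtimes$, with \cref{lem:projbox} as the key input. Your version simply unpacks the argument in more detail, making explicit the identification of $\varphi\boxtimes\id_{(Y,W)}$ with a morphism in the fiber $\rep_{X\times Y}$ and the intermediate use of the fact that pullback preserves projectives, both of which the paper's two-sentence proof leaves implicit.
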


\begin{proof}
    To show that $\boxtimes$ preserves stable equivalences, it suffices to show that $\boxtimes$ preserves those maps which factor through projectives. This follows from \cref{lem:projbox}.
\end{proof}

By work of Hinich \cite[Proposition 3.2.2]{Hinich:localization}, \cref{lem:boxtimes} is sufficient to conclude that the localization $\cS\simeq \cR[\cW\nv]$ is a monoidal localization. We summarize the resulting structure in the following proposition.

\begin{proposition} \label[proposition]{prop:boxtimes}
    The $\infty$-category $\cS$ admits a symmetric monoidal structure $\cS^\boxtimes$, such that the functor 
    \[
    \cR^\boxtimes \to \cS^\boxtimes \simeq \cR[\cW\nv]^\boxtimes
    \] 
    is symmetric monoidal.
\end{proposition}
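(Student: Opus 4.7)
The plan is to invoke Hinich's monoidal localization criterion, as signaled in the paragraph preceding the statement. The argument proceeds in three short and essentially formal steps, with the work already done in \cref{lem:boxtimes}.

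First, one packages the symmetric monoidal 1-category $(\rep,\boxtimes)$ into a symmetric monoidal $\infty$-category $\cR^\boxtimes$. Since the nerve functor carries symmetric monoidal 1-categories to symmetric monoidal $\infty$-categories, this is immediate from the 1-categorical structure constructed above, and the underlying $\infty$-category is precisely $\cR$. Next, one applies Hinich's Proposition 3.2.2. The hypothesis there is that tensoring with any fixed object preserves the collection of morphisms to be inverted, and this is precisely the content of \cref{lem:boxtimes}. Hinich's theorem then produces a symmetric monoidal structure on the localization $\cR[\cW\nv]$ together with a symmetric monoidal localization functor $\cR^\boxtimes \to \cR[\cW\nv]^\boxtimes$ enjoying the expected universal property among symmetric monoidal functors sending $\cW$ to equivalences.

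Finally, one transports this structure along the identification $\cS \simeq \cR[\cW\nv]$ of underlying $\infty$-categories recorded in the preceding paragraph, which yields $\cS^\boxtimes$ and identifies the localization functor $\cR^\boxtimes\to\cS^\boxtimes$ as symmetric monoidal. The only possible friction in the argument is confirming that the specific model for $\cS$ used in the paper, as the marked relative nerve of the fibrant replacement of the functor $S$, agrees as an $\infty$-category with Hinich's abstract localization; but this is exactly the content of the equivalence $\cS\simeq\cR[\cW\nv]$ appealed to via the \emph{Kerodon} reference immediately before the proposition. Thus no serious obstacle remains, and the statement is effectively a direct corollary of Hinich's theorem once the compatibility \cref{lem:boxtimes} is in hand.
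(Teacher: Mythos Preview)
Your proposal is correct and follows exactly the approach of the paper: the proposition is stated there without a separate proof, since the sentence immediately preceding it already records that Hinich's Proposition~3.2.2 together with \cref{lem:boxtimes} yields the monoidal localization $\cS\simeq\cR[\cW\nv]$. Your write-up simply unpacks that sentence into its constituent steps, which is entirely appropriate.
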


We turn to showing this monoidal structure is sufficient to exhibit the Green functoriality on the Mackey functors associated to $\cR$ and $\cS$ from the previous section. This entails showing they satisfy the following definitoin of Barwick--Glasman--Shah.

\begin{definition}[{\cite[Definition 7.3]{Barwick--Glasman--Shah}}]
    \label[definition]{def:SMWbicart}
    A \emph{symmetric monoidal Waldhausen bicartesian fibration} is a functor $u_\boxtimes:\cX_\boxtimes\to {\N\Fin_G}_\times$ satisfying the following properties.
    \begin{definitionenum}
        \setcounter{definitionenumi}{1}
        \item \label[definitionenumi]{def:SM2}
        The functor $u_\boxtimes$ is a cartesian fibration. 
        \item \label[definitionenumi]{def:SM3}
        The composite 
        \[
            \cX_\boxtimes \to {\N\Fin_G}_\times \to \N\Fin_*^\op
        \]
        exhibits $\cX_\boxtimes$ an an $\infty$-anti-operad.
        \item \label[definitionenumi]{def:SM4}
        The fiber $u:\cX\to\N\Fin_G$ over $\langle 1 \rangle \in \N\Fin_*^\op$ is a Waldhausen bicartesian fibration in the sense of \cref{def:Wbicart}.
    \end{definitionenum}
\end{definition}

To obtain this from our existing symmetric monoidal structures, we make use of the \emph{dual} of a cocartesian fibration, a construction introduced by Barwick--Glasman--Nardin \cite{Barwick--Glasman--Nardin:dual}. This associates to a cocartesian fibration of $\infty$-categories $u:\cX\to\cB$ a cartesian fibration $u^\wed:\cX^\wed\to\cB^\op$ which straightens to the same functor $\cB\to\Cat_\infty$ as $u$ does. If an $\infty$-category $\cC$ has a symmetric monoidal structure given by the $\infty$-operad $\cC^\otimes\to\N\Fin_*$, the dual $\cC^{\otimes,\wed}\to\N\Fin_*^\op$ is an $\infty$-anti-operad, as the opposite of the $\infty$-operad presenting the symmetric monoidal structure on $\cC^\op$.

\begin{proof}[Proof of \cref{thm:Green}]
    It suffices to show that $u_\cR^\wed$ and $u_\cS^\wed$ satisfy the conditions of \cref{def:SMWbicart}. Indeed, Barwick--Glasman--Shah show that the additional data of a symmetric monoidal Waldhausen bicartesian fibration equips the unfurling with the structure of a Green functor \cite[Corollary 7.8.1]{Barwick--Glasman--Shah}, and the underlying Waldhausen bicartesian fibrations, the fibers over $\langle 1\rangle\in\N\Fin_G$, are unchanged by the dual.

    These fibers were shown to be Waldhausen bicartesian fibrations in \cref{prop:Wbicart}, which gives condition \cref{def:SM4}. For condition \cref{def:SM3}, $\cR^\boxtimes\to\N\Fin_G^\times$ and $\cS^\boxtimes\to\N\Fin_G^\times$ are symmetric monoidal, so their duals
    \begin{align*}
        \cR_\boxtimes = \cR^{\boxtimes,\wed} &\xrightarrow{u_\cR^\wed} \N\Fin_G^{\times,\wed} = {\N\Fin_G}_\times \\
        \cS_\boxtimes = \cS^{\boxtimes,\wed} &\xrightarrow{u_\cS^\wed} \N\Fin_G^{\times,\wed} = {\N\Fin_G}_\times
    \end{align*}
    yield the desired maps of $\infty$-anti-operads. Lastly, to obtain the condition \cref{def:SM2}, it suffices to check that cartesian arrows are preserved by $\boxtimes$ (cf. \cite[Lemma 2.10]{Ramzi:monoidal}), which follows from \cref{lem:boxtimes}.
\end{proof}

\section{Induction theorems} \label{sec:ind}

Classically, the \emph{Cartan homomorphism} is the map 
\[
    \K_0(kG) = P_k(G) \to R_k(G) = \G_0(kG)
\]
from the Grothendieck group of projective $kG$-modules to the representation ring of $G$ over $k$. With the projective indecomposables and irreducibles as bases for $P_k(G)$ and $R_k(G)$, respectively, the resulting \emph{Cartan matrix} encodes the irreduible representations appearing as the composition factors of each projective indecomposable. 

From the previous section, we have a cofiber sequence of spectral Mackey functors \cref{eqn:cofiber}
\[
    \K_G(k) \to \K^G(k) \to \Ksg_G(k),
\]
where $\K_G(k)\to \K^G(k)$ is a lift of the Cartan homomorphism to spectral Mackey functors. We term this cofiber sequence the \emph{Cartan cofiber sequence}. If we define Mackey functors 
\begin{align*}
    \ul P_k &= \ul\pi_0 \K_G(k), \\
    \ul R_k &= \ul\pi_0 \K^G(k), \\
    \ul S_k &= \ul\pi_0 \Ksg_G(k),
\end{align*}
then the Cartan cofiber sequence \cref{eqn:cofiber} yields the short exact sequence of Mackey functors
\[
    0\to \ul P_k \to \ul R_k \to \ul S_k\to 0.
\]
Indeed, the Cartan homomorphism is known to be injective \cite[\S 16.1, Corollary 1 to Theorem 35]{Serre:lrfg}. As a result, for each $H\leq G$, the abelian group $\ul S_k(H)$ is the cokernel of the Cartan matrix for $kH$.

By \cref{thm:Green}, $\ul R_k \to \ul S_k$ is a map of Green functors, exhibiting $\ul S_k$ as the quotient of the Green functor $\ul R_k$ by the Green ideal $\ul P_k$. In this section, we investigate induction theorems for both the Green functor $\ul S_k$ and the spectral Green functor $\Ksg_G(k)$ using the derived induction theory framework developed by Mathew--Naumann--Noel \cite{MNN2}. In \cref{ssec:defbase}, we prove \cref{thm:defbase}, an induction result for the Green functor $\ul S_k$, and in \cref{ssec:proof}, we use this to prove \cref{thm:main}.

\subsection{Induction theorems on \texorpdfstring{$\K_0$}{K0}} \label{ssec:defbase}

We introduce some terminology. A \emph{family} of subgroups of $G$ is a collection of subgroups closed under subconjugacy. A Green functor $\ul R$ \emph{satisfies induction with respect to $\cF$} if the sum of induction maps
\[
    \Ind^G_\cF = \bigoplus_{H\in\cF}\Ind_H^G: \bigoplus_{H\in\cF} \ul R(H) \to \ul R(G)
\]
is surjective. The \emph{defect base} of $\ul R$ is the minimal family which $\ul R$ satisfies induction with respect to. This minimal family is known to be unique \cite[Lemma 3.11]{Green:axiomatic}.

In this language, the Brauer induction theorem states that for $k$ a sufficently large field, the defect base of $\ul R_k$ is the family $\cE$ of Brauer elementary subgroups \cite[\S 17.2, Corollary to Theorem 39]{Serre:lrfg}. A subgroup $H$ is \emph{Brauer elementary} if it is of the form $C\times Q$ where $Q$ is an $q$-group for some prime $q$ and $C$ is a cyclic $q'$-group. A field $k$ is \emph{sufficiently large} in the sense of Serre \cite{Serre:lrfg} if it contains all $m$th roots of unity, where $m$ is the exponent of $G$.

In this section, we prove \cref{thm:defbase}, which states that if $G$ is $p$-isolated, the defect base of the Green functor $\ul S_k$ is the family of $p$-subgroups. We first prove this for $k$ sufficiently large before extending to general fields $k$.

\begin{lemma} \label[lemma]{lem:elem}
    Suppose $k$ is a sufficiently large field of characteristic $p$. For any finite group $G$, $\ul S_k$ satisfies induction with respect to the family $\cE$ of Brauer elementary subgroups.
\end{lemma}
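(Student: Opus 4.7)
The plan is to deduce induction for $\ul S_k$ immediately from the classical Brauer induction theorem for $\ul R_k$, using that $\ul R_k\to \ul S_k$ is a surjective map of Mackey functors.

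First, I would recall from the opening of this section that the Cartan cofiber sequence \cref{eqn:cofiber} of spectral Mackey functors induces, after $\ul\pi_0$, the short exact sequence of Mackey functors
\[
    0 \to \ul P_k \to \ul R_k \to \ul S_k \to 0.
\]
In particular, the quotient map $\pi:\ul R_k\to \ul S_k$ is surjective at every level, and, as a map of Mackey functors, it commutes with the induction maps: $\pi_G\circ\Ind_H^G=\Ind_H^G\circ\pi_H$ for all $H\leq G$.

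Next, I would invoke the Brauer induction theorem (\cite[\S 17.2, Corollary to Theorem 39]{Serre:lrfg}): since $k$ is sufficiently large, the sum of induction maps
\[
    \bigoplus_{H\in\cE} \ul R_k(H) \xrightarrow{\,\bigoplus\Ind_H^G\,} \ul R_k(G)
\]
is surjective. Given any $s\in \ul S_k(G)$, I would choose a lift $r\in \ul R_k(G)$ with $\pi_G(r)=s$, write $r=\sum_i \Ind_{H_i}^G(y_i)$ for some $H_i\in\cE$ and $y_i\in \ul R_k(H_i)$, and then apply $\pi$ to obtain
\[
    s = \pi_G(r) = \sum_i \Ind_{H_i}^G\bigl(\pi_{H_i}(y_i)\bigr),
\]
exhibiting $s$ in the image of $\Ind_\cE^G$.

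There is no serious obstacle here: the statement is a formal consequence of Brauer induction together with Mackey-functor naturality of the Cartan surjection, both of which are already in hand. The nontrivial step is the classical input (Brauer), and the genuinely new work of the section comes in the next step, extending from sufficiently large $k$ to arbitrary $k$ (and sharpening the family from $\cE$ to $\cF_p$ under the $p$-isolated hypothesis); those I would handle in the lemmas following this one.
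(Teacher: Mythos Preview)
Your proposal is correct and matches the paper's argument: both deduce induction for $\ul S_k$ from Brauer induction for $\ul R_k$ together with the levelwise surjection $\ul R_k\to\ul S_k$ and its compatibility with induction. The paper presents this as a diagram chase through the map of short exact sequences (and incidentally notes Brauer induction for $\ul P_k$ as well, though this is not needed for the conclusion), while you phrase it as an explicit element-lift; the content is the same.
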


\begin{proof}
    Consider the map of short exact sequences
    \[ \begin{tikzcd}
        \ul P_k(G) \arrow[r, hook] & 
        \ul R_k(G) \arrow[r, two heads] & 
        \ul S_k(G) \\
        \displaystyle\bigoplus_{H\in\cE} \ul P_k(H) \arrow[r, hook] \arrow[u, two heads, "\Ind_\cE^G"] & 
        \displaystyle\bigoplus_{H\in\cE} \ul R_k(H) \arrow[r, two heads] \arrow[u, two heads, "\Ind_\cE^G"] &
        \displaystyle\bigoplus_{H\in\cE} \ul S_k(H) \arrow[u, "\Ind_\cE^G"]
    \end{tikzcd} \]
    induced by induction from the family $\cE$. The first two vertical arrows are surjections by the Brauer induction theorem. It follows that the last vertical arrow is also a surjection.
\end{proof}

We now show that the Green functor $\ul S_k$ satisfies a stronger induction result. Recall that the levels of the Green functor $\ul S_k$ are the cokernels of Cartan matrices. We make use of the following standard facts about Cartan matrices:
\begin{itemize}
    \item If $G$ is a $p'$-group, then the Cartan matrix of $kG$ is an identity matrix \cite[\S 15.5]{Serre:lrfg}.
    \item If $G$ is a $p$-group, then the Cartan matrix of $kG$ is the $1\times 1$ matrix $C=[|G|]$ \cite[\S 15.6]{Serre:lrfg}.
\end{itemize}

\begin{proposition} \label[proposition]{prop:deflarge}
    Suppose $k$ is a sufficiently large field of characteristic $p$ and $G$ is a $p$-isolated group. Then, $\ul S_k$ satisfies induction with respect to the family of $p$-subgroups.
\end{proposition}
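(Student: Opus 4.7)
The plan is to combine \cref{lem:elem} with a structural analysis of Brauer elementary subgroups in $p$-isolated groups, and the elementary fact that $\ul S_k$ vanishes on $p'$-groups. By \cref{lem:elem}, $\ul S_k$ already satisfies induction with respect to the family $\cE$ of Brauer elementary subgroups, so it suffices to show that the contribution of Brauer elementary subgroups that are not $p$-subgroups can be discarded.

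The first step is to show that every Brauer elementary subgroup $H\leq G$ of a $p$-isolated group $G$ is either a $p$-group or a $p'$-group. Writing $H = C\times Q$ with $Q$ a $q$-group and $C$ a cyclic $q'$-group, I would argue by cases on $q$. If $q=p$ and both $|C|>1$ and $|Q|>1$, then any prime $r\mid |C|$ (which must satisfy $r\neq p$) combines with an order-$p$ element of $Q$ to produce an element of order $pr$ in $H\leq G$, contradicting $p$-isolation. If $q\neq p$, then either $|Q|=1$ or $p\nmid |C|$ by the analogous argument. Cyclicity of $C$ further forces $|C|$ to be a $p$-power or coprime to $p$, since otherwise $C$ would contain an element of order $pr$ directly. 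Enumerating the surviving possibilities, $H$ is either a $p$-group or a $p'$-group.

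The second step uses the standard fact recalled just before the proposition: for a $p'$-group $H$ the Cartan matrix of $kH$ is the identity, so $\ul P_k(H)\to\ul R_k(H)$ is an isomorphism and hence $\ul S_k(H)=0$. Combined with the previous step, this gives
\[
    \bigoplus_{H\in\cE} \ul S_k(H) \;=\; \bigoplus_{\substack{H\in\cE\\ H\text{ a $p$-group}}} \ul S_k(H),
\]
with each summand on the right indexed by a $p$-subgroup of $G$. Surjectivity of $\Ind^G_\cE$ from \cref{lem:elem} then factors through the induction map from the family $\cF_p$ of $p$-subgroups, proving the proposition.

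The argument has no real obstacle beyond the case analysis in the first step; the substance is that the $p$-isolation hypothesis is exactly strong enough to prevent ``mixed'' Brauer elementary subgroups, while singular K-theory is already insensitive to the $p'$-part of representation theory. If there is any subtlety, it is ensuring the case analysis of $H=C\times Q$ is exhaustive, which is a routine bookkeeping matter.
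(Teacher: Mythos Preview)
Your proposal is correct and follows exactly the same approach as the paper: use \cref{lem:elem} to get induction from Brauer elementary subgroups, observe that in a $p$-isolated group every Brauer elementary subgroup is either a $p$-group or a $p'$-group, and discard the $p'$-contributions because $\ul S_k$ vanishes there. The paper's proof simply asserts the dichotomy in one sentence, whereas you spell out the case analysis on $H=C\times Q$, but the substance is identical.
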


\begin{proof}
    Because $G$ is $p$-isolated, Brauer elementary subgroups of $G$ are either $p$-groups or have order coprime to $p$. The Green functor $\ul S_k$ vanishes on subgroups coprime to $p$, so in fact, $\ul S_k$ satisfies induction with respect to the family of $p$-subgroups by \cref{lem:elem}.
\end{proof}

We extend this result to an arbitrary field $k$ of characteristic $p$, which yields \cref{thm:defbase}. 

\begin{proof}[Proof of \cref{thm:defbase}]
    Embed $k$ into a sufficiently large field $k'$ and consider the diagram
    \[ \begin{tikzcd}
        \ul S_k(G) \arrow[r, "\iota_S"] &
        \ul S_{k'}(G) \\
        \displaystyle\bigoplus_{H\in \cF_p} \ul S_k(H) \arrow[r, "\cong"] \arrow[u, "\Ind_{\cF_p}^G"] &
        \displaystyle\bigoplus_{H\in \cF_p} \ul S_{k'}(H) \arrow[u,two heads, "\Ind_{\cF_p}^G"]
    \end{tikzcd} \]
    where $\cF_p$ is the family of $p$-subgroups. The right vertical arrow is surjective by \cref{prop:deflarge}. The result follows if we show $\iota:\ul S_k(G)\to \ul S_{k'}(G)$ is injective. To this end, consider the diagram induced by extension of scalars
    \[ \begin{tikzcd}
        \ul P_k(G) \arrow[r, hook] \arrow[d, hook, "\iota_P"] &
        \ul R_k(G) \arrow[r, two heads] \arrow[d, hook, "\iota_R"] & 
        \ul S_k(G) \arrow[d, "\iota_S"] \\
        \ul P_{k'}(G) \arrow[r, hook] &
        \ul R_{k'}(G) \arrow[r, two heads] & 
        \ul S_{k'}(G).
    \end{tikzcd} \]
    The first two arrows are known to be split injective \cite[\S 14.6]{Serre:lrfg}. To show injectivity of the last arrow $\iota_S$, the snake lemma gives an exact sequence containing
    \[
        0 \to \ker(\iota_S) \to \coker(\iota_P).
    \]
    Since $\coker(\iota_P)$ is a free abelian group and $\ker(\iota_S)$ is a subgroup of the finite group $\ul S_k(G)$, it must be that $\ker(\iota_S)=0$.
\end{proof}

\subsection{Consequences for higher K-theory} \label{ssec:proof}

In this section, we assemble the preceeding results into a proof of \cref{thm:main}. 

We begin by clarifying how singular K-theory captures the K-theory of $kG$ at the prime $p$. After $p$-completion, the Cartan cofiber sequence \cref{eqn:cofiber} takes the form 
\[
    \K_G(k;\bbZ_p) \to \K^G(k;\bbZ_p) \to \Ksg_G(k;\bbZ_p).
\]
The middle term $K^G(k;\bbZ_p)$ has as its $H$-fixed points the $p$-complete $G$-theory spectrum 
\[
    \K^G(k;\bbZ_p)^H \simeq \G(kG;\bbZ_p).
\]
We will make use of the following decomposition of the G-theory of group algebras.

\begin{proposition} \label[proposition]{prop:Gtheory}
    We have 
    \[
        \G(kG) \simeq \bigoplus_{V} \K(\End(V))
    \]
    where the sum runs over the irreducible representations $V$ of $G$ over $k$.
\end{proposition}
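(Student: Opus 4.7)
The plan is to combine Quillen's dévissage theorem with a block decomposition via Schur's lemma. Recall that $\G(kG)$ is by definition the K-theory of the abelian category $\mathrm{mod}(kG)$ of finitely generated $kG$-modules. Since $kG$ is a finite-dimensional $k$-algebra, every such module has finite length and therefore admits a composition series whose successive quotients are simple.

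Let $\mathcal{M}\subseteq \mathrm{mod}(kG)$ denote the full subcategory of semisimple modules. This subcategory is abelian and is closed under subobjects and quotients, and the preceding observation shows that every object of $\mathrm{mod}(kG)$ admits a finite filtration with subquotients in $\mathcal{M}$. Quillen's dévissage theorem therefore yields $\K(\mathcal{M})\xrightarrow{\sim}\K(\mathrm{mod}(kG))=\G(kG)$. Next, enumerating the (finitely many) isomorphism classes of simple $kG$-modules by $V$, the vanishing of $\Hom_{kG}(V,V')$ for nonisomorphic simples produces a splitting
\[
    \mathcal{M}\simeq \prod_V \mathcal{M}_V
\]
of abelian categories, where $\mathcal{M}_V$ is the full subcategory on modules whose composition factors are isomorphic to $V$.

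By Schur's lemma, $\End_{kG}(V)$ is a finite-dimensional division algebra over $k$, and the functor $W\mapsto \Hom_{kG}(V,W)$ furnishes a Morita equivalence of $\mathcal{M}_V$ with the category of finitely generated $\End_{kG}(V)$-modules. Applying K-theory, and using that it converts finite products of abelian categories into direct sums, yields
\[
    \G(kG)\simeq \K(\mathcal{M})\simeq \bigoplus_V \K(\End_{kG}(V)),
\]
which is the claimed decomposition. The only nontrivial input is Quillen's dévissage theorem; the remaining steps are standard Morita-theoretic bookkeeping, so I do not anticipate any real obstacle in carrying out the plan.
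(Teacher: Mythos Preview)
Your proposal is correct and is precisely the standard d\'evissage argument the paper invokes by citing \cite[Application V.4.3]{kbook}; you have simply unpacked that reference. The paper gives no further detail, so there is nothing to compare beyond noting that your write-up is a faithful expansion of the cited argument.
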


\begin{proof}
    This is a standard Devissage argument \cite[Application V.4.3]{kbook}.
\end{proof}

If $k$ is finite, then the group algebra $kG$ of a finite group $G$ is a finite ring. Kuku has shown that the higher K-groups of finite rings are finite abelian groups \cite[Proposition IV.1.16]{kbook}. It follows that the $p$-adic K-groups are the $p$-local components of these finite abelian groups.

\begin{corollary} \label[corollary]{cor:boundary}
    For finite $k$, the boundary map associated to the Cartan cofiber sequence \cref{eqn:cofiber} induces a an isomorphism of Mackey functors
    \[
        \ul\pi_{n+1} \Ksg_G(k) \xrightarrow{\sim} \ul\pi_{n} \K_G(k;\bbZ_p), \qquad n>0.
    \]
\end{corollary}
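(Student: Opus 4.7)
The plan is to apply the long exact sequence of the $p$-completed Cartan cofiber sequence of Mackey functors and exploit the fact that the middle (G-theory) term has vanishing positive-degree homotopy after $p$-completion, while the third (singular K-theory) term is already levelwise $p$-complete in positive degrees. The ingredients are Quillen's computation of the K-theory of finite fields, Kuku's finiteness theorem for K-theory of finite rings, and the Dress--Kuku theorem cited in the introduction.

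First I would show that $\G_n(kH;\bbZ_p)=0$ for all $H\leq G$ and $n>0$. Applying \cref{prop:Gtheory} decomposes $\G(kH)\simeq\bigoplus_V \K(\End(V))$, where $V$ ranges over the irreducible $kH$-representations. Since $k$ is finite and $H$ is finite, Wedderburn's little theorem identifies each $\End(V)$ with a finite field $k_V$ of characteristic $p$. Quillen's computation then shows $\K_n(k_V)$ for $n>0$ is either zero or cyclic of order $|k_V|^i-1$, coprime to $p$, so $\G_n(kH)$ is itself coprime to $p$ and its $p$-completion vanishes.

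Second, I would argue that $\Ksg_n(kH)$ is a finite $p$-group for $n\geq 1$, hence unchanged by $p$-completion. Kuku's finiteness of $\K_*$ and $\G_*$ of finite rings in positive degrees, combined with a diagram chase in the long exact sequence of the Cartan cofiber \cref{Maru}, yields finiteness of $\Ksg_n(kH)$ for $n\geq 1$. The Dress--Kuku theorem then gives that $\K(kH)\to\G(kH)$ is an equivalence after inverting $p$, so the cofiber $\Ksg(kH)$ has homotopy groups on which $p$ acts topologically nilpotently; being finite, these are forced to be $p$-groups.

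Finally, I would $p$-complete \cref{eqn:cofiber} to obtain a cofiber sequence of spectral Mackey functors $\K_G(k;\bbZ_p)\to\K^G(k;\bbZ_p)\to\Ksg_G(k;\bbZ_p)$ and extract the long exact sequence on $\ul\pi_*$. Step~1 forces $\ul\pi_m\K^G(k;\bbZ_p)=0$ for $m>0$, so the boundary map $\ul\pi_{n+1}\Ksg_G(k;\bbZ_p)\xrightarrow{\sim}\ul\pi_n\K_G(k;\bbZ_p)$ is an isomorphism of Mackey functors for $n>0$, and precomposing with the completion isomorphism of Step~2 yields the claim. There is no single hard step to grind through; the main conceptual point is recognizing that Dress--Kuku plus finiteness pin $\Ksg$ down to finite $p$-groups in positive degrees, so that $p$-completion interacts cleanly with the long exact sequence.
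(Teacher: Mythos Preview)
Your proposal is correct and follows essentially the same route as the paper's proof: both use \cref{prop:Gtheory} together with Wedderburn's little theorem and Quillen's computation to kill $\ul\pi_{>0}\K^G(k;\bbZ_p)$, then invoke Dress--Kuku (plus Kuku's finiteness, which the paper cites just before the corollary) to see that $\Ksg$ is already $p$-local in positive degrees, and finally read off the isomorphism from the long exact sequence of the $p$-completed cofiber sequence. The only cosmetic difference is that the paper phrases the second step as $\ul\pi_*\Ksg_G(k)[1/p]=0$ rather than ``finite $p$-group,'' but the content is identical.
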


\begin{proof}
    If $k$ is finite, then the endomorphism rings appearing in the decomposition of \cref{prop:Gtheory} are finite-dimensional division algebras over $k$. A finite-dimensional division algebra must be a finite fields by Wedderburn's little theorem. It follows that $\ul\pi_i K^G(k;\bbZ_p)=0$ for $i>0$ by Quillen's calculation of the K-theory of finite fields \cite{Quillen:finite-field}. From the long exact sequence associated to the $p$-complete Cartan cofiber sequence, we have 
    \[
        \ul\pi_n \Ksg_G(k;\bbZ_p) \xrightarrow{\sim} \ul\pi_{n-1}\K_G(k;\bbZ_p).
    \]
    By the Dress--Kuku theorem \cite{Dress--Kuku}, the Cartan map induces an isomorphism  
    \[
        \ul\pi_* \K_G(k)[1/p] \xrightarrow{\sim} \ul\pi_*\K^G(k)[1/p].
    \]
    Thus, the long exact sequence associated to the Cartan cofiber sequence after localization away from $p$ yields $\ul\pi_*\Ksg_G(k)[1/p]\simeq 0$. We conclude $\ul\pi_{n+1} \Ksg_G(k;\bbZ_p) \cong \ul\pi_{n+1} \Ksg_G(k)$, which completes the proof.
\end{proof}

With this result in hand, we can now deduce the main induction theorem for K-theory from the induction theorem on singular K-theory implied by \cref{thm:defbase}. First, we recall the \emph{orbit category} $\cO(G)$ is the full subcategory of $\Fin_G$ spanned by the transitive $G$-sets. This is equivalent to the category whose
\begin{itemize}
    \item objects are subgroups $H$ of $G$, and 
    \item set of morphisms $H\to K$ is the set $K\backslash N_G(H,K)$, where $N_G(H,K)$ is the \emph{transporter}
    \[
        N_G(H,K) = \{ g\in G: gHg\nv\leq K \}.
    \]
\end{itemize}
Indeed, any $g\in N_G(H,K)$ determines an equivariant map of coset spaces $f:G/H\to G/K$ by $f(aH)=agK$, and all such maps arise this way. Furthermore, if $g'=gy$ for some $y\in K$, then $g$ and $g'$ evidently induce the same map $G/H\to G/K$. 

Given a family $\cF$, we write $\cO_\cF(G)$ for the full subcategory spanned by those transitive sets with isotropy in $\cF$. In the case where $\cF_p$ is the family of $p$-subgorups, we write $\cO_p(G)=\cO_{\cF_p}(G)$. 

\begin{proposition} \label[proposition]{prop:splvl}
    If $G$ is $p$-isolated and $k$ is finite,
    \[
        \Ksg(kG) \simeq \hocolim_{G/H\in\cO_p(G)} \Ksg(kH).
    \]
\end{proposition}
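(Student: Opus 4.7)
The plan is to apply the derived induction theorem of Mathew--Naumann--Noel \cite{MNN2} to the spectral Green functor $\Ksg_G(k)$. By \cref{thm:Green}, $\Ksg_G(k)$ is a spectral Green functor, which corresponds via the Guillou--May/Nardin equivalence to a $G$-equivariant $E_\infty$-ring spectrum. By \cref{thm:defbase}, the hypothesis that $G$ is $p$-isolated implies that the underlying $\pi_0$ Green functor $\ul\pi_0\Ksg_G(k)=\ul S_k$ satisfies induction with respect to the family $\cF_p$ of $p$-subgroups.

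The derived induction theorem then upgrades this $\pi_0$-level induction statement to the spectrum level: it implies that $\Ksg_G(k)$ is $\cF_p$-nilpotent, i.e., lies in the thick $\otimes$-ideal of the $G$-equivariant stable category generated by the orbit spectra $G/H_+$ for $H \in \cF_p$. A standard consequence of $\cF_p$-nilpotence, obtained by smashing with the $\cF_p$-cellular approximation $E\cF_{p,+}\to S^0$, is the assembly equivalence
\[
    \hocolim_{G/H\in\cO_p(G)} \Ksg_G(k)^H \xrightarrow{\sim} \Ksg_G(k)^G.
\]
To conclude, I would identify the $H$-fixed points as $\Ksg_G(k)^H \simeq \Ksg(kH)$ using the construction of $\Ksg_G(k)$ via the unfurling of the Waldhausen bicartesian fibration $u_\cS$ from \cref{ssec:Mackey}, whose fiber over $G/H$ is $\stmod_{G/H}\simeq\stmod(kH)$.

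The main obstacle is the bookkeeping required to bridge between the framework of spectral Green functors as presented here (symmetric monoidal functors out of $\cA_G^\otimes$) and the $G$-equivariant $E_\infty$-ring language in which MNN phrase their results, so that their theorem applies verbatim. Once this translation is in place, the application is direct and the remaining content is purely formal. The finiteness of $k$ does not enter this step in an essential way; it is inherited from the standing hypotheses of the section and becomes genuinely relevant only when combining \cref{prop:splvl} with \cref{cor:boundary} to prove \cref{thm:main}, where finiteness forces the higher homotopy of the middle term in the Cartan cofiber sequence to vanish.
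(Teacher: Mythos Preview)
Your proposal is correct and follows essentially the same route as the paper: invoke \cref{thm:Green} for the ring structure, \cref{thm:defbase} for $\pi_0$-induction, and then apply the Mathew--Naumann--Noel machinery to obtain the colimit decomposition. The only minor refinement in the paper's argument is that MNN's \cite[Proposition~4.4]{MNN2} actually yields the stronger conclusion that $\Ksg_G(k)$ is $\cF_p$-\emph{split} (not merely $\cF_p$-nilpotent), though either suffices for the decomposition; your observation that the finiteness of $k$ plays no role in this particular step is also accurate.
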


\begin{proof}
    By \cref{thm:Green}, $\Ksg_G(k)$ is a $G$-ring spectrum, and by \cref{thm:defbase}, the Green functor $\ul\pi_0 \Ksg_G(k)$ satisfies induction with respect to the family $\cF_p$ of $p$-subgroups. It follows from work of Mathew--Naumann--Noel \cite[Proposition 4.4]{MNN2} that $\Ksg_G(k)$ is $\cF_p$-split. Being $\cF_p$-split is stronger than being $\cF_p$-nilpotent, which implies the stated homotopy colimit decomposition.
\end{proof}

\begin{remark}
    We highlight that \cref{cor:boundary} is where the hypothesis that $k$ is finite enters the picture. We focus on finite fields in this work, but point out the conclusion of \cref{cor:boundary}, and thus \cref{thm:main}, is true if $k$ is instead perfect and a splitting field for $G$. 

    Indeed, if $k$ is a splitting field, then the endomorphism rings appearing in \cref{prop:Gtheory} are each isomorphic to $k$. Further, it is known that the $p$-adic K-theory of a perfect field is concentrated in degree zero \cite{Hesselholt--Madsen:finitealgebras}.
\end{remark}

\begin{proof}[Proof of \cref{thm:mainColim}]
    It is a general fact that $G$-spectra which are split with respect to a family in the sense of Mathew--Naumann--Noel have such a colimit decomposition of homotopy groups \cite[Theorem 4.16]{MNN2}. The result now follows from the isomorphism $\Ksg_{n+1}(kG)\cong\K_n(kG;\bbZ_p)$ of \cref{cor:boundary}.
\end{proof}

Under the additional hypothesis of the intersection condition for the Sylow $p$-subgroup $S$, we are able to simplify the colimit appearing in 
\cref{thm:mainColim} to the coinvariants of the Weyl group action at the $S$ level. This makes use of the following handy lemma.

\begin{lemma}[{\cite[Proposition 6.31]{MNN}}] \label[lemma]{lem:cofinal}
    Let $\cF$ be a family of subgroups, and let $\cA$ be a collection of subgroups such that 
    \begin{itemize}
        \item every $H\in \cF$ is contained in an element of $\cA$, and 
        \item $\cA$ is closed under conjugation and intersection.
    \end{itemize}
    Then, the inclusion $\cO_\cA(G)\to\cO_\cF(G)$ is cofinal.
\end{lemma}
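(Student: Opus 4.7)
The plan is to verify Lurie's criterion for cofinality: for each $G/H\in\cO_\cF(G)$, the fiber product $\cO_\cA(G)\times_{\cO_\cF(G)}\cO_\cF(G)_{G/H/}$ should have weakly contractible nerve. Unpacking the definitions, the objects of this fiber product are pairs $(G/K, f:G/H\to G/K)$ with $K\in\cA$, and the morphisms are equivariant maps $G/K\to G/K'$ commuting with the structure maps out of $G/H$.

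First, I would replace each object by one in ``normalized form.'' Given $(G/K,f)$ with $f(eH)=g_0 K$, the stabilizer $K_0 := g_0 K g_0\nv$ lies in $\cA$ by closure under conjugation and contains $H$. The $G$-equivariant isomorphism $G/K_0\xrightarrow{\sim} G/K$ sending $aK_0\mapsto ag_0K$ then identifies the original object with $(G/K_0,\pi)$, where $\pi$ is the canonical projection. This reduces the problem to the full subcategory spanned by such normalized objects.

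Second, I would run a direct calculation on morphisms between normalized pairs. Writing an arbitrary equivariant map $G/K_0\to G/K_0'$ as $aK_0\mapsto a g K_0'$ and imposing compatibility over $G/H$ would show that such a morphism exists iff $K_0\leq K_0'$, and is unique in that case. Combined with the normalization step, this exhibits an equivalence of categories
\[
    \cA_H:=\{K\in\cA : H\leq K\} \xrightarrow{\sim} \cO_\cA(G)\times_{\cO_\cF(G)}\cO_\cF(G)_{G/H/},
\]
where $\cA_H$ is regarded as a poset under inclusion.

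Finally, the hypotheses give $\cA_H$ an initial object: it is non-empty by the containment assumption on $\cA$, and closure under intersection together with the finiteness of $G$ make $\cA_H$ a finite meet-semilattice with minimum $\bigcap_{K\in\cA_H}K$. A poset with an initial object has contractible nerve, so cofinality follows. The main subtlety lies in the second step: one must verify both that automorphisms of normalized objects are trivial and that morphisms between them reduce to inclusions of subgroups, so that the equivalence with $\cA_H$ is fully faithful rather than merely essentially surjective.
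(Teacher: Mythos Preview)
The paper does not supply its own proof of this lemma; it is quoted directly from \cite[Proposition~6.31]{MNN}. Your sketch is correct and matches the argument given there: one applies Quillen's Theorem~A by identifying the comma category under $G/H$ with the poset $\cA_H=\{K\in\cA:H\leq K\}$, which is contractible because closure under intersection (and finiteness of $G$) gives it a least element.

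One small comment on presentation: you should make explicit that the hypothesis $\cA\subseteq\cF$ is implicit here, since otherwise $\cO_\cA(G)$ is not even a subcategory of $\cO_\cF(G)$; in the paper's applications this is clear from context. Otherwise the normalization step and the verification that morphisms between normalized objects biject with inclusions are exactly the right points to check, and your identification of the subtlety (full faithfulness, not just essential surjectivity) is on target.
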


\begin{proof}[Proof of \cref{thm:mainOrb}]
    If a Sylow $p$-subgroup $S$ is trivial intersection, then by \cref{lem:cofinal}, the full subcategory of $\cO_p(G)$ spanned by the trivial subgroup and the Sylow subgroups is cofinal. Moreover, for $n\geq 1$, $p$-adic K-theory vanishes at the bottom level. Alltogether, the colimit over $\cO_p(G)$ reduces to the colimit over the subcategory spanned by the Sylow subgroups, which is the displayed coinvariants.
\end{proof}

\begin{remark}
    A more general form of the Brauer induction theorem states that for an arbitrary characteristic $p$ field $k$, the Green functor $\ul R_k$ satsfies induction with respect to the larger family of so-called \emph{hyperelementary} subgroups \cite[\S 17.2, Theorem 39]{Serre:lrfg}. These are subgroups of the form $C\rtimes Q$ where $Q$ is an $q$-group for some prime $q$ and $C$ is a cyclic $q'$-group. A similar argument as to \cref{lem:elem} shows that $\ul S_k$ also satisfies induction with respect to this family.
    
    By the above discussion, this implies that there is an induction theorem of the form 
    \[
        \K_n(kG;\bbZ_p) \cong \colim_{G/H\in\cO_{\wt{\cE}}(G)} \K_n(kH;\bbZ_p), \quad n>0
    \]
    for \emph{arbitrary} finite groups $G$, where $\wt{\cE}$ is the family of hyperelementary subgroups of $G$. We include this observation because it may be of independent interest, although we will not use it further.
\end{remark}

\section{Examples} \label{sec:ex}

This section is devoted to consequences of \cref{thm:main}.

We begin with a study of the $p$-isolation hypothesis in \cref{ssec:piso}. The term ``$p$-isolated'' is inspired by the \emph{prime graph} of Gruenberg--Kegel \cite{Williams:prime-graph}. This is the graph associated to the finite group $G$ whose vertices are prime numbers $q$ diving the order of $G$, in which two primes $q$ and $r$ are connected by an edge whenever $G$ contains an element of order $qr$. If $p$ divides the order of $G$, then $G$ is $p$-isolated exactly when $p$ is an isolated vertex of the prime graph. Williams has determined the prime graph components of finite simple groups \cite[Tables I,II]{Williams:prime-graph}. From this, the $p$-isolated finite simple groups can be read off.

In \cref{ssec:Sylp}, we show that for $p$-isolated group with order $p$ Sylow $p$-subgroups, the Weyl action appearing in \cref{thm:mainOrb} can be identified. Specifically, we prove \cref{thm:Sylp}, which leads to a number of explicit calculations of K-groups.

In the final section, \cref{ssec:bigS}, we highlight interesting examples of $p$-isolated groups with larger Sylow $p$-subgroups. For these groups, we cannot explicitly identify the Weyl action. We nonetheless record the consequences of \cref{thm:main} for these examples.

\subsection{\texorpdfstring{$p$}{p}-isolated Frobenius groups} \label[subsection]{ssec:piso}

The theory of Frobenius groups provides an interesting source of $p$-isolated groups. A \emph{Frobenius group} is a group $G$ admitting a decomposition as a semidirect product $K\rtimes H$, where $H$ acts freely on $K$. Here, we interpret freeness of the action in the pointed sense, meaning the stablizer of any nontrivial element of $K$ is trivial. In this situation, $K$ is called the \emph{Frobenius kernel} and $H$ is the \emph{Frobenius complement}.

\begin{proposition} \label[proposition]{prop:frob}
    Let $G=K\rtimes H$, where $H$ acts freely on $K$. If either
    \begin{propositionenum}
        \item \label[propositionenumi]{prop:copkernel}
        $H$ is a $p$-group and $K$ is a $p'$-group, or
        \item \label[propositionenumi]{prop:pkernel}
        $H$ is a $p'$-group and $K$ is a $p$-group, 
    \end{propositionenum}
    then $G$ is $p$-isolated and its Sylow $p$-subgroups are trivial intersection.
\end{proposition}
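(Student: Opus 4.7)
The plan is to reduce both cases to the classical Frobenius structure theorem: in a Frobenius group $G = K \rtimes H$, every element of $G$ lies either in $K$ or in a unique $G$-conjugate of $H$; distinct conjugates of $H$ intersect trivially in the identity; and $N_G(H) = H$. Once this is in hand, both the $p$-isolation property and the trivial intersection property follow by bookkeeping on element orders, with the only choice being which of $K$ or $H$ carries the $p$-part of $|G|$.

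First I would record the consequence of the Frobenius partition most useful for us: any $x \in G$ has order dividing either $|K|$ or $|H|$, according to which piece of the partition contains it. With this single observation, $p$-isolation in both cases will reduce to the fact that at most one of $|K|$ and $|H|$ is divisible by $p$, so no single element order can be divisible by both $p$ and a distinct prime $q$.

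For part \cref{prop:copkernel}, the $p$-part of $|G| = |K|\cdot|H|$ equals $|H|$, so $H$ is a Sylow $p$-subgroup and every Sylow $p$-subgroup of $G$ is a $G$-conjugate of $H$. The Frobenius property $gHg\nv \cap g'Hg'^{-1} = \{e\}$ for $gH \neq g'H$ is precisely the trivial intersection condition on Sylow $p$-subgroups, since $N_G(H) = H$. The $p$-isolation claim follows from the order dichotomy above: elements of $K$ have order coprime to $p$, while elements of conjugates of $H$ have $p$-power order. For part \cref{prop:pkernel}, the roles reverse. Now $K$ is itself a normal Sylow $p$-subgroup of $G$, so $N_G(K) = G$ and the trivial intersection condition is vacuous. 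The same element-order argument gives $p$-isolation, with elements of $K$ now contributing the $p$-power orders and elements of conjugates of $H$ contributing the $p'$-orders.

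There is no real obstacle beyond citing the Frobenius structure theorem as a black box (e.g.\ from a standard finite group theory reference); after that both cases are essentially a division of $|G|$ into its $p$-part and $p'$-part.
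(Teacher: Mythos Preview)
Your proof is correct and close in spirit to the paper's, but the route to $p$-isolation differs. The paper first records an auxiliary lemma (\cref{lem:A}) giving equivalent characterizations of $p$-isolation in terms of centralizers, then invokes the Frobenius fact that centralizers of nontrivial kernel elements lie in $K$: in case~(a) this shows centralizers of $p$-regular elements are $p'$-groups, and in case~(b) that centralizers of order-$p$ elements are $p$-groups. You instead use the Frobenius partition $G = K \cup \bigcup_g gHg^{-1}$ to argue directly from element orders, bypassing any centralizer statement. Your approach is slightly more self-contained for this proposition alone, while the paper's centralizer lemma packages the condition in a form that connects naturally to the later argument in \cref{lem:Wfree}. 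For the trivial intersection claims both proofs are identical: the Frobenius property $H \cap H^g = e$ for $g \notin H = N_G(H)$ handles case~(a), and normality of $K$ makes case~(b) vacuous.
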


The proof relies on an alternate characterization of $p$-isolation in terms of centralizers, which we give without proof. Here, we say a finite group is a \emph{$p'$-group} for if its order is not divisible by $p$, and a group element is \emph{$p$-regular} is its order is not divisible by $p$.

\begin{lemma} \label[lemma]{lem:A}
    The following conditions on a finite group $G$ and prime $p$ are equivalent:
    \begin{enumerate}
        \item $G$ is $p$-isolated: there is no element of order $pq$ for $q$ a prime distinct from $p$,
        \item the centralizer of any element of order $p$ in $G$ is a $p$-group, and
        \item the centralizer of any $p$-regular element in $G$ is a $p'$-group.
    \end{enumerate}
\end{lemma}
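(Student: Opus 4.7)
The plan is to prove the three conditions equivalent using a single elementary observation: an element of order $pq$ (with $p\neq q$ primes) can always be decomposed into commuting elements of orders $p$ and $q$, and conversely, commuting elements of coprime orders $p$ and $q$ generate a cyclic subgroup of order $pq$. Specifically, if $g$ has order $pq$, then $g^q$ has order $p$ and $g^p$ has order $q$, and these commute; conversely, if $x,y$ commute with $x$ of order $p$ and $y$ of order $q$ (with $\gcd(p,q)=1$), then $xy$ has order $pq$. With this observation, all implications are short.

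For (1)$\Rightarrow$(2), I would argue contrapositively: if the centralizer of some element $x$ of order $p$ contains an element $y$ whose order is divisible by a prime $q\neq p$, replace $y$ by a suitable power to make it of order $q$; then $xy$ has order $pq$, contradicting $p$-isolation. For (2)$\Rightarrow$(1), suppose $g$ has order $pq$; then $g^q$ has order $p$, and $g^p$ lies in $C_G(g^q)$ but has order $q$, contradicting that this centralizer is a $p$-group.

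For (1)$\Rightarrow$(3) and (3)$\Rightarrow$(1), the argument is symmetric. If $y$ is $p$-regular and $C_G(y)$ contains an element of order $p$, pick a prime $q\mid |y|$ and a power $y'$ of $y$ of order $q$; then $y'$ still centralizes the order-$p$ element, yielding an element of order $pq$. Conversely, if $g$ has order $pq$, then $g^p$ is $p$-regular of order $q$, and $g^q\in C_G(g^p)$ has order $p$, contradicting that $C_G(g^p)$ is a $p'$-group.

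There is no real obstacle here; the lemma is purely a matter of unwinding definitions using the order-$pq$/commuting-coprime-orders dictionary. The only mild care required is to pass from an element of arbitrary order divisible by $q$ to an element of order exactly $q$ (or exactly $p$) by raising to an appropriate power, which is legitimate because powers of a centralizing element still centralize.
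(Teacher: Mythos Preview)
Your argument is correct and is the standard elementary proof. There is nothing to compare against: the paper explicitly states this lemma \emph{without proof} (see the sentence immediately preceding it), so your write-up supplies what the paper omits.

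One small point worth tightening in your (1)$\Rightarrow$(3): as written, condition~(3) would apply to the identity element (which is $p$-regular of order~$1$), forcing $G$ itself to be a $p'$-group; your argument ``pick a prime $q\mid |y|$'' silently assumes $y\neq e$. The intended reading---confirmed by how the paper applies the lemma in the proof of \cref{prop:frob}, where only nontrivial elements of the Frobenius kernel are considered---is that (3) concerns nontrivial $p$-regular elements. With that understanding your proof goes through without change.
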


\begin{proof}[Proof of \cref{prop:frob}]
    It is a general property of Frobenius groups $G$ that the centralizer of any nontrivial element of the Frobenius kernel $K$ is contained in $K$. We handle the two situations separately.
    \begin{enumerate}
        \item  In this situation, the $p$-regular elements are exactly those in $K$. Since $G$ is Frobenius, the centralizers of nontrivial elekents in $K$ are contained in $K$, so \cref{lem:A} gives that $G$ is $p$-isolated. The Frobenius complement $H$ is a Sylow $p$-sibgroup, and the trivial intersection property follows from general properties of Frobenius groups. Indeed, it is true for any Frobenius complement $H$ in a Frobenius group $G$ that $H\cap H^g=e$ for any $g\notin H$.
        
        \item In this situation, any element of order $p$ is contained in $K$, so its centralizer is also contained in the $p$-group $K$. By \cref{lem:A}, this proves $G$ is $p$-isolated. The subgroup $K$ is a normal Sylow $p$-subgorup of $G$, which is automatically trivial intersection. \qedhere
    \end{enumerate}
\end{proof}

\begin{example} \label[example]{ex:dih2}
    For any odd $m$, \cref{prop:copkernel} gives that the dihedral gorup $D_m=C_m\rtimes C_2$ is 2-isolated and its Sylow 2-subgroup $S=C_2$ is trivial intersection condition. In this case, the Weyl group is trivial, so by \cref{thm:mainOrb}, we have for any finite characteristic 2 field $k$,
    \[
        \K_n(kD_m;\bbZ_2) \cong \K_n(kC_2;\bbZ_2), \qquad n> 1.
    \]
\end{example}

\begin{example} \label[example]{ex:dihpn}
    For any odd prime $p$, the dihedral group $D_{p^n}=C_{p^n}\rtimes C_2$ is $p$-isolated by \cref{prop:pkernel}. In \cref{ex:dihp} below, we provide an explicit calculation of $p$-adic K-groups in the case $n=1$.
\end{example}

\begin{example} \label[example]{ex:AGL1q}
    For any prime power $q=p^r$, the group 
    \[
        \AGL_1(\bbF_q) = \bbF_q \rtimes \bbF_q^\times \cong C_p^r \rtimes C_{p^r-1}
    \]
    is $p$-isolated by \cref{prop:pkernel}. In \cref{ex:AGL1p} below, we provide an explicit calculation of $p$-adic K-groups in the case $r=1$.
\end{example}

We provide the following partial converse to \cref{prop:pkernel}.

\begin{lemma} \label[lemma]{lem:Wfree}
    If $G$ is $p$-isolated with Sylow $p$-subgroup $S$, then $W$ acts freely on $S$ by conjugation.
\end{lemma}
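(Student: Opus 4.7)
The plan is to unpack what it means for $W$ to act with nontrivial fixed points on $S$ and derive a contradiction from the characterization of $p$-isolation given by \cref{lem:A}. Suppose for contradiction that some $w\in W$ fixes a nontrivial element $s\in S$ under conjugation. Lift $w$ to a representative $n\in N_G(S)$; the fixing condition translates to $n s n\nv = s$, that is, $n\in C_G(s)$.

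Next, I would reduce to an element of order exactly $p$, so that \cref{lem:A} applies. Since $s$ is a nontrivial element of the $p$-group $S$, some power $t=s^{p^{k-1}}$ (where $s$ has order $p^k$) has order $p$, and clearly $n$ centralizes $t$ as well. By the equivalence (1)$\Leftrightarrow$(2) in \cref{lem:A}, the centralizer $C_G(t)$ is a $p$-group, so $n$ is a $p$-element of $N_G(S)$.

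The final step is to argue that any $p$-element of $N_G(S)$ lies in $S$. This is a standard Sylow-theoretic fact: because $S$ is a Sylow $p$-subgroup of $G$, it is also a (normal) Sylow $p$-subgroup of $N_G(S)$, so the subgroup $\langle n,S\rangle$ is a $p$-subgroup of $N_G(S)$ containing $S$, forcing $\langle n,S\rangle=S$ and hence $n\in S$. But then $w=nS$ is trivial in $W$, contradicting our assumption.

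There is no serious obstacle here; the only conceptual point is the reduction to an element of order $p$, which is what lets us invoke the centralizer characterization of $p$-isolation from \cref{lem:A}. Everything else is elementary Sylow theory.
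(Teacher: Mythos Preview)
Your proof is correct and follows essentially the same approach as the paper's. The paper's argument is terser---it asserts directly that since $G$ is $p$-isolated, any $x\in N$ centralizing a nontrivial $s\in S$ must lie in a $p$-subgroup of $N$, and hence in $S$---but the underlying reasoning (pass to a power of $s$ of order $p$, invoke \cref{lem:A}, then use that $S$ is the unique Sylow $p$-subgroup of $N$) is exactly what you have spelled out.
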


\begin{proof}
    Suppose $x\in N$ satisfies $xsx\nv=s$ for some $s\in S$. Since $G$ is $p$-isolated, $x$ must be contained in a $p$-subgroup of $N$, but $S$ is a normal Sylow $p$-subgroup of $N$, so $x\in S$. Thus, the element of $W$ that $x$ descends to is trivial.
\end{proof}

\begin{remark}
    \cref{lem:Wfree} implies that the normalizer $N=N_G(S)$ is a Frobenius group, with kernel $S$ and complement $W$. This restricts the possible groups that can appear as the Weyl group of a $p$-isolated group, although we will not make use of this observation. For example, any group admitting the structure of a Frobenius complement satisfies the property that all its abelian subgroups are cyclic \cite[Theorem 6.14]{Serre:finite-groups}.
\end{remark}

Recall that \cref{thm:mainOrb} states that the trivial intersection property for Sylow $p$-subgroups reduces the colimit appearing in the main theorem to coinvariants of the Weyl group action. We show this additional hypothesis is automatically satisfied if $S$ is abelian.

\begin{lemma} \label[lemma]{lem:Sabelian}
    If $G$ is $p$-isolated and a Sylow subgroup $S$ is abelian, then $S$ is a trivial intersection subgroup.
\end{lemma}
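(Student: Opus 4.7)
The plan is to assume for contradiction that there is some $g \notin N_G(S)$ with $S \cap gSg\nv$ containing a nontrivial element $x$, and produce a contradiction by showing $g \in N_G(S)$.

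First I would reduce to an element of order exactly $p$: since $x$ lies in the $p$-group $S$, some power $y = x^{p^{k-1}}$ has order $p$. I would then use abelianness to show that both $S$ and $gSg\nv$ are contained in $C_G(y)$. Indeed, $S$ is abelian and contains $y$, so $S \subseteq C_G(y)$; likewise $gSg\nv$ is abelian and contains $y$ (since $y$ is a power of $x$ and $x \in gSg\nv$), so $gSg\nv \subseteq C_G(y)$.

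Next I would invoke the $p$-isolation hypothesis via the characterization in \cref{lem:A}: because $y$ has order $p$, its centralizer $C_G(y)$ is a $p$-group. Since $S$ is a Sylow $p$-subgroup of $G$ contained in the $p$-subgroup $C_G(y)$, maximality forces $C_G(y) = S$. Consequently $gSg\nv \subseteq S$, and comparing orders gives $gSg\nv = S$, i.e., $g \in N_G(S)$, contradicting the choice of $g$.

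The argument is essentially routine once one knows to pass to an order-$p$ power and to leverage abelianness to place both conjugate Sylows inside a single centralizer. There is no real obstacle; the only subtle point is to notice that \cref{lem:A} is phrased for elements of order exactly $p$ (rather than arbitrary $p$-power order), which is precisely why the reduction to $y = x^{p^{k-1}}$ is needed at the outset.
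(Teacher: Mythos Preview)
Your argument is correct and follows essentially the same route as the paper: both use abelianness of $S$ to place Sylow subgroups inside the centralizer of a nontrivial element of $S\cap gSg\nv$, invoke $p$-isolation to force that centralizer to equal $S$, and conclude $gSg\nv=S$. The only cosmetic difference is that the paper works with two elements $x=gyg\nv$ and the identity $C_G(gyg\nv)=gC_G(y)g\nv$, whereas you pass to an order-$p$ power and place both $S$ and $gSg\nv$ inside a single centralizer; your explicit reduction to order $p$ is in fact a bit cleaner, since \cref{lem:A} is stated only for elements of order exactly $p$.
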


\begin{proof}
    Suppose that $S\cap gSg\nv$ is nontrivial for some $g\in G$. We show that $g$ normalizes $S$. This implies there are nontrivial $x,y\in S$ such that $x=gyg\nv$. Since $G$ is $p$-isolated and $S$ is abelian, we have $C_G(x)=C_G(y)=S$. Furthermore, we have 
    \[ 
        S = C_G(x) = C_G(gyg\nv) = gC_G(y)g\nv = gSg\nv,
    \]
    which proves the claim.
\end{proof}

\begin{example} \label[example]{ex:PSU3}
    Is it possible for the conclusion of \cref{lem:Sabelian} to hold, even with a nonabelian Sylow $p$-subgroup. As an example, consider the Frobenius group $G=C_3^2\rtimes Q_8$ arising from the unique irreducible 2-dimensional representation of $Q_8$. This group is also known as $\mathrm{PSU}_3(\bbF_2)$ or the Mathieu group $M_9$. 
    
    This representation is free (away from the origin), so by \cref{prop:copkernel}, $G$ is 2-isolated and its Sylow 2-subgroups $S\cong Q_8$ are trivial intersection, although they are nonabelian. The Weyl group of $S=Q_8$ in $G$ is trivial, so we obtain 
    \[
        \K_n(kG;\bbZ_2) \cong \K_n(kQ_8;\bbZ_2), \quad n\geq 1.
    \]
    as a consequence of \cref{thm:mainOrb} for any finite characteristic 2 field $k$. However, the K-groups of $kQ_8$ remain out of reach, so we cannot provide a more explicit calculation.
\end{example}

\subsection{Sylow subgroups of order \texorpdfstring{$p$}{p}} \label[subsection]{ssec:Sylp}

In this section, we focus on the case of $p$-isolated groups $G$ with Sylow subgroups of order $p$ and prove \cref{thm:Sylp}, which gives explicit K-theory calculations for such groups. By \cref{thm:mainOrb},  
\[
    \K_n(kG;\bbZ_p) \cong \K_n(kC_p;\bbZ_p)/W, \quad n>0.
\]
The most intricate part is determining the action of the Weyl group $W$ on the higher K-groups of $kC_p$. To understand this, we must recall some aspects of the calculation of $\K_*(kC_p)$ via trace methods. This is originally due to Madsen \cite{Madsen:survey}, but we follow the more modern approach of Hesselholt--Nikolaus \cite{Hesselholt--Nikolaus:handbook}. The main result is as follows.

\begin{proposition} \label[proposition]{prop:Wmodule}
    Under the hypotheses of \cref{thm:Sylp}, there is an isomorphism of $W$-modules 
    \[
        \K_n(kC_p;\bbZ_p) \cong \pi_n \left( C_p \otimes \sus \THH(k)_{h\bbT_p} \right), \quad n>0.
    \]
\end{proposition}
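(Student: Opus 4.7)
The plan is to use trace methods to compute $\K_n(kC_p;\bbZ_p)$ via topological cyclic homology, and then carefully identify the $W$-action on the result. As a preliminary observation, since $G$ is $p$-isolated and $S = C_p$ is Sylow, the centralizer $C_G(s)$ of a nontrivial $s \in S$ is a $p$-group containing $S$, and hence equals $S$. Therefore $W = N_G(S)/S$ embeds into $\Aut(C_p) \cong (\bbZ/p)^\times$ and acts on $kC_p$ via its standard action on $C_p$.

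The first step is to apply the Dundas--Goodwillie--McCarthy theorem to the augmentation $\epsilon: kC_p \twoheadrightarrow k$. Under the isomorphism $kC_p \cong k[t]/(t^p)$ given by $g \mapsto 1+t$ for a generator $g$ of $C_p$, the augmentation ideal is $(t)$ with $t^p = 0$, so $\epsilon$ is a nilpotent surjection. The cyclotomic trace then induces a $W$-equivariant equivalence of relative fibers,
\[
    \mathrm{fib}\bigl(\K(kC_p;\bbZ_p) \to \K(k;\bbZ_p)\bigr) \simeq \mathrm{fib}\bigl(\TC(kC_p;\bbZ_p) \to \TC(k;\bbZ_p)\bigr).
\]
Since $k$ is finite, Quillen's theorem gives $\K_n(k;\bbZ_p) = 0$ for $n > 0$, and the analogous vanishing holds for $\TC_n(k;\bbZ_p)$. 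Thus for $n > 0$, one obtains a $W$-equivariant isomorphism between $\K_n(kC_p;\bbZ_p)$ and $\pi_n$ of the relative $\TC$.

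Next, I would invoke the Hesselholt--Madsen computation of $\TC$ of truncated polynomial algebras, in the Nikolaus--Scholze form presented in Hesselholt--Nikolaus \cite{Hesselholt--Nikolaus:handbook}. This produces a natural weight decomposition of the relative $\THH(k[t]/(t^p), (t))$ into shifts of $\bbT$-equivariant copies of $\THH(k)$, one for each of the $p-1$ nonzero weights modulo $p$. Passing to $\TC$ and taking the relevant $\bbT$-homotopy orbits assembles these into the desired identification with $\pi_n(C_p \otimes \sus \THH(k)_{h\bbT_p})$, the identity summand contributing trivially in positive degrees. The final task is to verify that the $W$-action transfers to the stated permutation action on $C_p$. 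The automorphism $g \mapsto g^w$ of $C_p$ sends $t = g - 1$ to $(1+t)^w - 1$, whose leading term modulo the $t$-adic filtration is $w \cdot t$. Consequently, on the weight $i$ summand of the Hesselholt--Madsen decomposition, the induced action is multiplication by $w^i$, and identifying the set of weights $\{1, \ldots, p-1\}$ with $C_p \setminus \{e\}$ via $i \mapsto g^i$ recovers the permutation action of $W$ on $C_p$.

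The main obstacle is rigorously establishing the $W$-equivariance of the Hesselholt--Madsen weight decomposition, since the $W$-action on $k[t]/(t^p)$ does not preserve the $t$-adic grading on the nose, only the filtration. Resolving this requires either a spectral sequence argument showing that the higher-order terms do not obstruct the identification of the $W$-action on homotopy groups, or a more intrinsic construction of the weight decomposition that is manifestly functorial in the group $C_p$ (for instance, by decomposing $\THH(kC_p)$ along the basis of $C_p$ itself and exploiting the naturality of the $W$-action on this basis).
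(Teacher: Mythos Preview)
Your approach differs substantially from the paper's, and the obstacle you flag at the end is exactly the point where the paper takes a different route.

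The paper does not pass through the truncated-polynomial weight decomposition at all. Instead, it works with the genuine $G$-spectrum $\TC_G(k)$ (whose $H$-fixed points are $\TC(kH)$) and applies the isotropy separation/assembly cofiber sequence
\[
    EG_+ \otimes \TC_G(k) \to \TC_G(k) \to \WhTC_G(k).
\]
Because this is a sequence of $G$-spectra, the induced long exact sequence on $C_p$-fixed-point homotopy is automatically a sequence of $W$-modules. Madsen's analysis reduces it, in each odd degree, to a four-term exact sequence
\[
    0 \to H_{2i-1}(BC_p;k) \to \TC_{2i-1}(kC_p) \to \pi_{2i-1}\WhTC_G(k)^{C_p} \to H_{2i-1}(BC_p;k) \to 0.
\]
The paper then invokes a short lemma: in any abelian category, an exact sequence $0 \to A \to B \to C \to A \to 0$ with $A$ both projective and injective forces $B \cong C$. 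Since $|W|$ is prime to $p$, the category of $kW$-modules is semisimple and the lemma applies, yielding the $W$-module isomorphism without ever tracking the $W$-action through the Hesselholt--Nikolaus computation. The identification of the $W$-action on the cofiber as the permutation action on $C_p$ is then carried out separately, via the centralizer decomposition of $\THH(kC_p) \simeq \THH(k) \otimes LBC_{p+}$, which is manifestly natural in $C_p$ --- precisely the ``intrinsic construction'' you gesture at in your last sentence.

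Your route can be made to work, but two points need repair. First, the filtration-versus-grading issue you raise is resolved by the very same semisimplicity of $kW$: any filtered $kW$-module (or more precisely $\bbF_p[W]$-module, since the homotopy groups are elementary abelian $p$-groups and $p \nmid |W|$) is isomorphic to its associated graded, so the spectral sequence argument you propose collapses for formal reasons. Second, your final identification is not quite right as stated: the action ``by the scalar $w^i$ on the weight-$i$ summand'' is not literally the permutation action under the bijection $i \mapsto g^i$ (one permutes summands, the other does not). The two $W$-representations are nevertheless abstractly isomorphic --- both are $(p-1)/|W|$ copies of the regular representation $k[W]$, since $W$ acts freely on $(\bbZ/p)^\times$ and the characters $w \mapsto w^i$ exhaust $\widehat{W}$ with equal multiplicity --- but you need to say this rather than assert an equivariant bijection of index sets.
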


\begin{proof}
    The cyclotomic trace can be lifted to a map of $G$-spectra $\TC_G(k)\to \K_G(k)$ \cite[Example 2.19]{CMNN2}. Here, $\TC_G(k)$ is a $G$-spectrum with $H$-fixed points $\TC_G(k)^H\simeq \TC(kH)$. The relevant result for us is that in positive degrees and after $p$-completion, this is an isomorphism \cite[Theorem D]{Hesselholt--Madsen:finitealgebras}. It therefore suffices to compute $C_p$-spectrum $\TC_G(k)^{C_p}$.

    The assembly map induces a cofiber sequence of $G$-spectra
    \[
        EG_+ \otimes \TC_G(k) \to \TC_G(k) \to \WhTC_G(k) = \tEG \otimes \TC_G(k).
    \]
    This is a special case of the $\cF$-isotropy separation sequence, where $\cF$ consists of only the trivial subgroup. The Hesselholt--Nikolaus result can be interpreted as determining the nonequivariant stable homotopy type of the cofiber at the $S\cong C_p$ level. They show there is an equivalence of nonequivariant spectra 
    \[
        \WhTC_G(k)^{C_p} \simeq C_p \otimes \sus\THH_{h\bbT_p}(k)
    \]
    where $C_p$ is regarded as a pointed set \cite[Theorem 15.4.1]{Hesselholt--Nikolaus:handbook}. 

    Madsen \cite[Lemma 5.1.19]{Madsen:survey} showed that the long exact sequence on homotopy groups associated to the assembly cofiber sequence reduces to the short exact sequence
    \[
        0 \to H_{2i-1}(BC_p;k) \to \TC_{2i-1}(kC_p) \to \pi_{2i-1} \WhTC_G(k)^{C_p} \to H_{2i-1}(BC_p;k) \to 0.
    \]
    By the above discussion, this is a short exact sequence of $W$-modules. The result follows from the following lemma, which uses that and $kW$ is a semisimple ring (since $W$ has order coprime to $p$), so all modules are simultaneously projective and injective.
\end{proof}

\begin{lemma} \label[lemma]{lem:projinj}
    Given an exact sequence in an abelian category
    \[
        0 \to A \xrightarrow{f} B \xrightarrow{g} C \xrightarrow{h} A \to 0
    \]
    where $A$ is both injective and projective, there exists an abstract isomorphism $B\cong C$.
\end{lemma}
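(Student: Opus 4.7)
The plan is to break the four-term exact sequence into two short exact sequences glued along the object $K = \ker(h) = \im(g)$. Specifically, exactness at $B$ and $C$ gives short exact sequences
\[
    0 \to A \xrightarrow{f} B \to K \to 0 \quad \text{and} \quad 0 \to K \to C \xrightarrow{h} A \to 0,
\]
where the map $B \to K$ is induced by $g$ and the map $K \to C$ is the inclusion of $\ker(h)$ into $C$.

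Next I would exploit the two hypotheses on $A$ separately. Since $A$ is injective, the first short exact sequence splits (because the identity $A \to A$ extends along $f$), yielding $B \cong A \oplus K$. Since $A$ is projective, the second short exact sequence splits (because the identity $A \to A$ lifts along $h$), yielding $C \cong K \oplus A$. Combining these two splittings gives the desired abstract isomorphism
\[
    B \cong A \oplus K \cong C.
\]

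There is no real obstacle here: the argument is formal once one observes that $A$'s dual roles as injective and projective are exactly what is needed to split the two halves of the sequence. The only subtlety worth noting in the writeup is that the isomorphism produced is not canonical; it depends on the choice of splittings, which is acceptable since the lemma only asserts abstract isomorphism. This is precisely the flexibility needed in the application to \cref{prop:Wmodule}, where $kW$-semisimplicity ensures that every module is simultaneously projective and injective, so the lemma applies to the four-term sequence of $W$-modules on homotopy groups.
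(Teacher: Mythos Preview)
Your proof is correct and follows essentially the same approach as the paper's: both split the four-term sequence into two short exact sequences through $\im(g)$, use injectivity of $A$ to split the left one and projectivity of $A$ to split the right one, and conclude $B \cong A \oplus \im(g) \cong C$. The only cosmetic difference is that the paper phrases the left-hand splitting in terms of $B/\im(f)$ and then identifies $B/\im(f) \cong B/\ker(g) \cong \im(g)$, whereas you work directly with $K = \im(g)$ from the start.
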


\begin{proof}
    The extension $\im(g) \to C \to A$ is split since $A$ is projective, so $C\cong A\oplus \im(g)$. Likewise, the extension $A\to B\to B/\im(f)$ is split since $A$ is injective, so $B\cong A\oplus B/\im(f)$. The result follows from the isomorphisms 
    \[
        B/\im(f) \cong B/\ker(g) \cong \im(g). \qedhere
    \]
\end{proof}

\begin{proof}[Proof of \cref{thm:Sylp}]
    The Weyl group $W\leq \Aut(C_p)$ acts on $C_p$ as a pointed set, and thus on the spectrum $C_p \otimes \sus\THH(k)_{h\bbT_p}$. To see this, the action of $W$ on 
    \[
        \THH(kG) \simeq \THH(k) \otimes LBG_+ \simeq \THH(k) \otimes \Big( \coprod_{x\in C_p} BC_G(x)_+ \Big)
    \]
    permutes the non-identity summands in the centralizer decomposition. These non-identity summands comprise the cofiber of the assembly map on $\THH$, and $\TC$ respects this decomposition into summands \cite[Proof of Theorem 15.4.1]{Hesselholt--Nikolaus:handbook}.
    
    Away from the basepoint, the $W$ action on the pointed set $C_p$ has $(p-1)/|W|$ many orbits, since the action is free by \cref{lem:Wfree}. Hesselholt--Nikolaus show \cite[Remark 15.4.8]{Hesselholt--Nikolaus:handbook}
    \[
        \pi_{2i}\THH(k)_{h\bbT_p} \cong k^i, \quad i\geq 0.
    \]
    Alltogether, we conclude that the coinvariants satisfy
    \[
        \K_{2i-1}(kC_p;\bbZ_p)/W \cong \pi_{2i-1} \left(C_p \otimes \sus\THH(k)_{h\bbT_p}\right)/W \cong k^{\frac{p-1}{|W|}i}, \quad i>0. \qedhere
    \]
\end{proof}

Wtih this formula in hand, we turn to applications.

\begin{example} \label[example]{ex:dihp}
    For any odd prime $p$, the dihedral group $D_p$ is $p$-isolated with Weyl group $W\cong C_2$. It follows from \cref{thm:Sylp} that 
    \[
        \K_{2i-1}(kD_p;\bbZ_p) \cong k^{\frac{p-1}{2}i}, \quad i>0.
    \]
\end{example}

\begin{example} \label[example]{ex:symp}
    The symmetric group $\Sigma_n$ is $p$-isolated for $n\leq p+1$. Indeed, $\Sigma_n$ has order coprime to $p$ for $n<p$, and the only elements with order dividing $p$ in $\Sigma_p$ or $\Sigma_{p+1}$ have order $p$ exactly. 
    
    In either $\Sigma_p$ and $\Sigma_{p+1}$, the Weyl group $W$ has order $p-1$. It follows from \cref{thm:Sylp} that
    \[
        \K_{2i-1}(k\Sigma_p;\bbZ_p) \cong \K_{2i-1}(k\Sigma_{p+1};\bbZ_p) \cong k^{i}, \quad i>0.
    \]
\end{example}

\begin{remark} \label[remark]{rmk:LRRV}
    As an application of their work on assembly maps for topological cyclic homology of group algebras, Lück--Reich--Rognes--Varisco show \cite[Proposition 1.3]{LRRV2}
    \[
        \TC(A\Sigma_3;\bbZ_p) \simeq \TC(AC_2;\bbZ_p) \oplus \widetilde{\TC}(AC_3;\bbZ_p)_{hC2}
    \]
    for connective ring spectra $A$. We compare this to our result that 
    \[
        \Ksg(k\Sigma_3) \simeq \Ksg(kC_3)_{hC_2}.
    \]
    for $k$ a finite characteristic 3 field.
\end{remark}

We generate many more examples from \cref{ex:symp} by passing to subgroups. 

\begin{corollary} \label[corollary]{cor:Aaction}
    If $G$ is a finite group which admits a faithful transitive action on a set of size at most $p+1$, then $G$ is $p$-isolated and its Sylow $p$-subgroups are trivial intersection.
\end{corollary}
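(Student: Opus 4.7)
The plan is to leverage the fact that a faithful transitive action on a set of size $n \leq p+1$ realizes $G$ as a subgroup of $\Sigma_n$, and then to transfer $p$-isolation and the abelian-Sylow condition from $\Sigma_n$ down to $G$ so that the previously established lemmas apply.

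First I would observe that faithfulness of the action gives an injective homomorphism $G \hookrightarrow \Sigma_n$. By \cref{ex:symp} (together with the trivial case $n < p$, for which $p \nmid |\Sigma_n|$), the symmetric group $\Sigma_n$ is $p$-isolated for every $n \leq p+1$. The property of $p$-isolation is clearly inherited by subgroups: any element of $G$ of order $pq$ with $q \neq p$ prime is also such an element of $\Sigma_n$, contradicting $p$-isolation of $\Sigma_n$. Therefore $G$ is $p$-isolated.

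Next I would analyze the Sylow $p$-subgroups of $G$. A Sylow $p$-subgroup of $\Sigma_n$ for $n \leq p+1$ has order at most $p$, since the exponent of $p$ in $n!$ equals $\lfloor n/p \rfloor \leq 1$. By Lagrange's theorem applied to the embedding $G \hookrightarrow \Sigma_n$, the Sylow $p$-subgroups of $G$ are contained in Sylow $p$-subgroups of $\Sigma_n$, so they have order $1$ or $p$. In either case they are cyclic, hence abelian. Applying \cref{lem:Sabelian} to the $p$-isolated group $G$ with abelian Sylow $p$-subgroup then yields the trivial intersection property.

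No serious obstacle arises here; the argument is essentially a bookkeeping exercise combining \cref{ex:symp} and \cref{lem:Sabelian}. The only mild subtlety is to note that the statement is vacuous (and true) when the Sylow $p$-subgroup is trivial, which happens precisely when $n < p$.
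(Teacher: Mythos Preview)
Your proof is correct and follows essentially the same approach as the paper: embed $G$ into $\Sigma_{n}$ with $n\leq p+1$, inherit $p$-isolation from \cref{ex:symp}, and use that the Sylow $p$-subgroups have order at most $p$. The only cosmetic difference is that the paper observes directly that a subgroup of order $p$ is automatically trivial intersection (any nontrivial intersection with a conjugate forces equality), whereas you route through \cref{lem:Sabelian}; both arguments are valid.
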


\begin{proof}
    Any such $G$ embeds into $\Sigma_{p+1}$ by hypothesis, which is $p$-isolated by \cref{ex:symp}. The condition of $p$-isolation is evidently closed under passing to subgroups, so $G$ is $p$-isolated as well. Moreover, any Sylow $p$-subgroup of $G$ is trivial intersection since it is contained in a Sylow $p$-subgroup of $\Sigma_p$, which has order $p$.
\end{proof}

\begin{example} \label[example]{ex:altp}
    The alternating group $A_n$ is $p$-isolated for $n\leq p+2$. Indeed, if $n\leq p+1$, this is covered by \cref{cor:Aaction}. For $n=p+2$, the same observation holds, namely that the only elements of order dividing $p$ have order $p$ exactly.
    
    We assume $p$ is odd, since $A_2$ is trivial, and $A_3$ is cyclic, and $A_4$ has a larger Sylow 2-subgroup; see \cref{ex:A45-2}. In this case, the groups $A_p$ and $A_{p+1}$ have Weyl groups of order $(p-1)/2$. It follows from \cref{thm:Sylp} that
    \[
        \K_{2i-1}(kA_p;\bbZ_p) \cong \K_{2i-1}(kA_{p+1};\bbZ_p) \cong k^{2i}, \quad i>0.
    \]
    The Weyl group of $A_{p+2}$ has order $p-1$, so
    \[
        \K_{2i-1}(kA_{p+2};\bbZ_p) \cong k^{i}, \quad i>0.
    \]
\end{example}

\begin{example} \label[example]{ex:AGL1p}
    For any $p$, the group
    \[ 
        \AGL_1(\bbF_p) = \bbF_p \rtimes \bbF_p^\times \cong C_p \rtimes C_{p-1}
    \]
    is $p$-isolated, either as a special case of \cref{ex:AGL1q} or following from \cref{cor:Aaction}. The Weyl group $W\cong\Aut(C_p)$ has order $p-1$, so \cref{thm:Sylp} gives
    \[
        \K_{2i-1}(k\AGL_1(p);\bbZ_p) \cong k^i, \quad i>0.
    \]
\end{example}

\begin{remark} \label[remark]{rmk:HN}
    In their survey article, Hesselholt--Nikolaus analyze the assembly cofiber sequence for the TC of group algebras of $G=\AGL_1(\bbF_p)\cong C_p\rtimes \Aut(C_p)$. Here, we sketch how this leads to an alternate proof of the calculation in \cref{ex:AGL1p}. 
    
    For $k$ a finite characteristic $p$ field, their result \cite[Theorem 15.4.9]{Hesselholt--Nikolaus:handbook} identifies
    \[
        \WhTC_G(k)^G \simeq \sus\THH(k)_{h\bbT_p}\oplus \TC(k)\otimes B\Aut(C_p).
    \]
    The summand $\TC(k)\otimes B\Aut(C_p)$ has homotopy groups concentrated in nonpositive degrees, so the long exact sequence associated to the assembly cofiber sequence reduces to 
    \[
        0 \to H_{2i-1}(BG;k) \to \TC_{2i-1}(kG) \to \pi_{2i-1} \WhTC_G(k)^G \to H_{2i-1}(BG;k) \to 0
    \]
    for $i>0$. By \cref{lem:projinj}, this results in an isomorphism
    \[
        \K_{2i-1}(kG;\bbZ_p) \cong \TC_{2i-1}(kG) \cong \pi_{2i} \THH(k)_{h\bbT_p} \cong k^i, \qquad i>0.
    \]
\end{remark}

\begin{example} \label[example]{ex:PSL2}
    For any $p$, the group $\PSL_2(p)$ is $p$-isolated by \cref{cor:Aaction}, as it acts faithfully on the $p+1$ points of the projective line over $\bbF_p$. If $p=2$, there is an isomorphism $\PSL_2(2)\cong \Sigma_3$, so the 2-adic K-theory can be determined from \cref{ex:symp}. For $p$ odd, the Weyl group has order $(p-1)/2$. It follows from \cref{thm:Sylp} that
    \[
        \K_{2i-1}(k\PSL_2(p);\bbZ_p) \cong k^{2i}, \quad i>0.
    \]
\end{example}

We finish this section by illustrating how to use these methods to obtain complete integral calculations of K-groups. We carry this out for the example of $k\Sigma_p$, which was addressed in \cref{ex:symp}. The calculation uses knowledge of the representation theory of $\Sigma_p$ over $k$, and in particular, the splitting fields of irreducible representations. An similar calculation is possible for the above groups, given similar representation-theoretic input.

\begin{corollary} \label[corollary]{cor:FpSp}
    The algebraic K-groups of $\bbF_{p^r}\Sigma_p$ are
    \[ 
    \K_n(\bbF_p\Sigma_p) \cong \begin{cases}
        \bbZ^c & n = 0, \\
        (\bbZ/p^{ri}-1)^c \oplus (\bbZ/p)^{ri} & n = 2i-1>0, \\
        0 & \textrm{otherwise,}
    \end{cases} \]
    where $c$ denotes the number of irreduble representations of $\Sigma_p$ over $\bbF_p$.
\end{corollary}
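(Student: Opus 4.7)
The plan is to run the long exact sequence on homotopy groups of the Cartan cofiber sequence $\K(kG) \to \G(kG) \to \Ksg(kG)$ with $G = \Sigma_p$ and $k = \bbF_{p^r}$, after computing each of the three terms.

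For $\G$, I would apply \cref{prop:Gtheory}. The essential representation-theoretic input is that the modular irreducibles of $\Sigma_p$ (the James modules $D^\lambda$ for $p$-regular partitions $\lambda \vdash p$) are absolutely irreducible and defined over the prime field, so that $\End(V) \cong k$ for each such irreducible $V$. \Cref{prop:Gtheory} then gives $\G(\bbF_{p^r}\Sigma_p) \simeq \K(\bbF_{p^r})^c$, and Quillen's computation of $\K_*(\bbF_{p^r})$ yields $\G_0 \cong \bbZ^c$, $\G_{2i-1} \cong (\bbZ/(p^{ri}-1))^c$, and $\G_n = 0$ in all other degrees.

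For $\Ksg$ in degrees $\geq 2$, \cref{cor:boundary} combined with \cref{ex:symp} gives $\Ksg_{2i}(k\Sigma_p) \cong k^i \cong (\bbZ/p)^{ri}$ for $i \geq 1$ and $\Ksg_n(k\Sigma_p) = 0$ for odd $n \geq 3$. I still need to show $\Ksg_1(k\Sigma_p) = 0$. Since both $\K_1(kG)$ and $\G_1(kG)$ are finite by Kuku's theorem, so is $\Ksg_1(kG)$, and it suffices to check that both its $p$-primary and $p'$-primary parts vanish. The $p'$-part vanishes by the Dress--Kuku theorem, and the $p$-primary part fits into the $p$-adic long exact sequence as $\Ksg_1(kG;\bbZ_p) \hookrightarrow \K_0(kG;\bbZ_p) \to \G_0(kG;\bbZ_p)$, whose right-hand map is the $p$-adic Cartan map, injective because the classical Cartan determinant is a nonzero power of $p$.

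With these inputs, the long exact sequence collapses in each positive degree to a short exact sequence
\[
0 \to (\bbZ/p)^{ri} \to \K_{2i-1}(\bbF_{p^r}\Sigma_p) \to (\bbZ/(p^{ri}-1))^c \to 0, \qquad i \geq 1,
\]
which splits because the two outer terms have coprime orders. The vanishing $\K_{2i}(\bbF_{p^r}\Sigma_p) = 0$ for $i > 0$ and the identification $\K_0 \cong \bbZ^c$ then fall out of the remaining portions of the sequence. I expect the main obstacle to be bookkeeping in low degrees: the representation theory of $\Sigma_p$ enters cleanly from James's theory, and the rest is largely diagram chasing, but the vanishing of $\Ksg_1$ is what requires the most care, via the interplay of the integral and $p$-adic long exact sequences described above.
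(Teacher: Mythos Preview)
Your argument is correct, and it uses the same essential ingredients as the paper (the G-theory decomposition of \cref{prop:Gtheory}, the splitting-field fact for symmetric groups, the Dress--Kuku theorem, and \cref{ex:symp}), but you thread them through the long exact sequence of the Cartan cofiber sequence rather than arguing directly. The paper's proof is shorter: since Kuku's theorem makes $\K_n(kG)$ finite for $n>0$, one can simply write $\K_n(kG) \cong \K_n(kG)_{(p)} \oplus \K_n(kG)[1/p]$; the first summand is $\K_n(kG;\bbZ_p)$, supplied by \cref{ex:symp}, and the second is $\G_n(kG)[1/p] = \G_n(kG)$ by Dress--Kuku and Quillen. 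This sidesteps computing $\Ksg_*$ altogether and in particular avoids your careful (and correct) low-degree analysis showing $\Ksg_1 = 0$. Your route has the minor advantage of making the extension problem explicit and showing directly why it splits, but the paper's decomposition is the more efficient packaging of the same content.
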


\begin{proof}
    It is known that any field $k$ is a splitting field for the symmetric groups. It follows that the endomorphism rings appearing in the G-theory formula \cref{prop:Gtheory} are all the ground field $k$ \cite[Proposition 9.2.5]{Webb:book}. By the Dress--Kuku Theorem \cite{Dress--Kuku}, this gives the calculation away from the prime $p$. The $p$-local summand is given by \cref{ex:symp}.
\end{proof}

\subsection{Larger Sylow subgroups} \label{ssec:bigS}

If the Sylow $p$-subgroups have order larger than $p$, then we are unable to explicitly identify the Weyl action on $\K_*(kS)$ as in the previous section. Nonetheless, we record some interesting examples of $p$-isolated groups with larger Sylow $p$-subgroups and state consequences of the main theorem for these.

\begin{example} \label[example]{ex:A45-2}
    At the prime $p=2$, $A_4$ and $A_5$ are both 2-isolated with trivial intersection Sylow 2-subgroups $S\cong C_2\times C_2$ by \cref{lem:Sabelian}. The Sylow subgroup is normal in $A_4$ with Weyl group $W\cong C_3$, and $A_4$ is the normalizer of the Sylow 2-subgroup in $A_5$. It follows from \cref{thm:main} that for $k$ any finite characteristic 2 field,
    \[
        \K_n(kA_4;\bbZ_2) \cong \K_n(kA_5;\bbZ_2) \cong \K_n(kC_2^2;\bbZ_2)/C_3, \quad n\geq 1.
    \]    
    Forthcoming work of Moselle computes the K-groups $\K_*(kC_p^n)$. However, at present, we are unable to identify the Weyl actions explicitly.
\end{example}

\begin{example} \label[example]{ex:A6-3}
    At the prime $p=3$, a similar story plays out for $G=A_6$ as in previous example. The Sylow 3-subgroup is $S\cong C_3\times C_3$ which is trivial intersection by \cref{lem:Sabelian}, and the Weyl group is $W\cong C_4$. It follows from \cref{thm:main} that for $k$ any finite characteristic 3 field,
    \[
        \K_n(kA_6;\bbZ_3) \cong \K_n(kC_3^2;\bbZ_3)/C_4, \quad n\geq 1.
    \]
\end{example}

We conclude with some exotic examples at the prime $p=2$.

\begin{example} \label[example]{ex:S4-2}
    The group $\Sigma_4$ is 2-isolated with self-normalizing Sylow 2-subgroup $S\cong D_4$, but $S$ is not trivial intersection. Indeed, the Sylow 2-subgroups of $\Sigma_4$ all contain a normal Klein four subgroup generated by the conjugacy class of $(1\,2)(3\,4)\in\Sigma_4$. Therefore, \cref{thm:mainColim} does not apply, but we may still use the more genreal \cref{thm:mainOrb}. By \cref{lem:cofinal}, we may reduce the colimit over $\cO_2(\Sigma_4)$ to the subcategory spanned by the Sylow 2-subgroups and this normal Klein four subgroup. We conclude that for a finite characteristic 2 field $k$, 
    \[
        \K_n(k\Sigma_4;\bbZ_2) \cong \colim\left( 
            \begin{tikzcd}
                \K_n(kC_2^2;\bbZ_2) \arrow[loop, looseness = 2, "\Sigma_4/C_2^2\cong\Sigma_3"'] \arrow[r] \arrow[r, shift left = 2] \arrow[r, shift right = 2] & 
                \K_n(kD_4;\bbZ_2)
            \end{tikzcd}
        \right), \quad n\geq 1.
    \]
\end{example}

\begin{example} \label[example]{ex:A6-PSL27-2}
    A similar story as for $\Sigma_4$ plays out for the groups $A_6$ and $\PSL_2(7)$: they are 2-isolated with self-normalizing Sylow 2-subgroup $S\cong D_4$ which is also not trivial intersection. We leave it to the interested reader to formulate an analysis similar to \cref{ex:S4-2}.
\end{example}

\small

\noindent \textsc{Department of Mathematics, Cornell University, Ithaca, NY 14853} \\
\noindent \emph{Email address:} \texttt{cpv29@cornell.edu}

\end{document}